\renewcommand{\email}[2][]{%
  \ifx\emails\@empty\relax\else{\g@addto@macro\emails{,\space}}\fi%
  \@ifnotempty{#1}{\g@addto@macro\emails{\textrm{(#1)}\space}}%
  \g@addto@macro\emails{#2}%
}
  \renewcommand{\setminus}{\mathbin{\backslash}}%
\pgfplotsset{
        compat=1.16,
    }
\theoremstyle{plain}
\newtheorem{theorem}{Theorem}[section]
\newtheorem{lemma}[theorem]{Lemma}
\theoremstyle{definition}
\newtheorem{definition}[theorem]{Definition}
\newtheorem{example}[theorem]{Example}
\newtheorem{remark}[theorem]{Remark}
\newtheorem{problem}[theorem]{Problem}
\newcommand{\mbbR}{{\mathbb R}}
\newcommand{\mbbZ}{{\mathbb Z}}
\newcommand{\mcalC}{{\mathcal C}}
\newcommand{\mfrako}{{\mathfrak o}}
\newcommand{\va}{{\mathbf a}}
\newcommand{\vb}{{\mathbf b}}
\newcommand{\ve}{{\mathbf e}}
\newcommand{\vE}{{\mathbf E}}
\DeclareMathOperator{\ab}{\operatorname{ab}}
\DeclareMathOperator{\ec}{\operatorname{ec}}
\DeclareMathOperator{\supp}{\operatorname{supp}}
\DeclareMathOperator{\concat}{\operatorname{concat}}
\DeclareMathOperator{\Out}{\operatorname{Out}}
\DeclareMathOperator{\In}{\operatorname{In}}
\DeclareMathOperator{\red}{\operatorname{red}}
\begin{document}

\title{Shortest Circuits in Homology Classes of Graphs}
%
%
%

\author{Ye Luo} 
\address[A1]{School of Informatics, Xiamen University, Xiamen, Fujian 361005, China}
\email[A1]{luoye@xmu.edu.cn}

\subjclass[2020]{05C38, 05C45,05C50, 90C27}
\keywords{Homological Circuit Detection Problems, Direction Consistent Circuits, Transportation Routing Problem.}

\date{}
\begin{abstract}
Recently, the study of circuits and cycles within the homology classes of graphs has attracted considerable research interest. However, the detection and counting of shorter circuits in homology classes, especially the shortest ones, remain underexplored. This paper aims to fill this gap by solving the problem of detecting and counting the shortest cycles in homology classes, leveraging the concept of direction-consistent circuits and extending classical results on Eulerian circuits such as Hierholzer's algorithm and the BEST theorem. As an application, we propose the one-carrier transportation routing problem and relate it to a circuit detection problem in graph homology. 
\end{abstract}
\maketitle
\section{Introduction}
In graph theory, the homology groups of graphs~\cite{S2013homology}  are used to quantify and classify topological features within graph. In  a recent work by \cite{LR2024spectral}, the authors explored properties of circuits and cycles in prescribed homology classes of a graph, presenting some trace formulas for counting circuits and cycles in these homology classes and some asymptotic formulas for this counting problem as the circuit length approaches infinity. In this paper, we will take a different approach by investigating shorter circuits, specifically focusing on the detecting and counting problems of the shortest circuits within the homology classes of graphs. 

Let us introduce some preliminary terminologies and notations. Throughout this paper, a \emph{graph} refers to a (not necessarily connected) finite graph that may contain loops and multiple edges but has no isolated vertices, and we say a graph is a \emph{simple graph} if it does not contain any loops or multiple edges.  Let $G$ be a graph with vertex set $V(G)=\{v_1,\cdots,v_n\}$ and the (unoriented) edge set of $G$ be $E(G)=\{e_1,\cdots,e_m\}$. 
Consider an  \emph{orientation} $\mathfrak{o}$  of $G$. Let $\ve_{\mathfrak{o},i}$ and $\ve_{\mathfrak{o},m+i}=\ve_{\mathfrak{o},i}^{-1}$ be respectively the positively and negatively oriented edges corresponding to $e_i$ with respect to $\mathfrak{o}$. Let $\vE_\mathfrak{o}(G)=\{\ve_{\mathfrak{o},1},\cdots,\ve_{\mathfrak{o},m}\}$ and  $\vE(G)=\{\ve_{\mathfrak{o},1},\cdots,\ve_{2m}\}$. Then $G_\mathfrak{o}:=(V(G),\vE_\mathfrak{o}(G))$ is a digraph whose underlying graph is $G$.  For each oriented edge $\ve\in\vE(G)$, denote by $\ve(0)$ and $\ve(1)$ the \emph{initial} and \emph{terminal} vertices of $\ve$ respectively. Then a \emph{walk} $P$ of length $l$ on $G$ is a sequence of oriented edges $\va_1\cdots\va_L$ such that $\va_{i+1}(0)=\va_{i}(1)$ for $i=1,\cdots,l-1$, and we call $P(0):=\va_1(0)$ and $P(1):=\va_l(1)$ the \emph{initial} and \emph{terminal} vertices of $P$ respectively. If the initial and terminal vertices of $P$ are the same vertex $v$, then we say $P$ is a \emph{closed walk} at the base vertex $v$. In the following, we will mainly consider closed walks with no backtracks and tail (meaning that $P$ can not get through $\ve$ and $\ve^{-1}$ consecutively), which are called \emph{circuits} on $G$.   Denote by $\mcalC_G$ be the set of all circuits on $G$. Then for each $k\in\mbbZ$, there is a translation map $\mathfrak{t}_k$ acting on $\mcalC_G$ by sending a circuit $C=\va_0\cdots\va_{l-1}$ to $\mathfrak{t}_k(C) = \va_{s_0}\cdots \va_{s_{l-1}}$ where $s_i  =( i+k) \mod L$. Clearly the action of all translations induces an equivalence relation $\sim$ on $\mcalC_G$. 
Denote by $[C]=(\va_0\cdots\va_{L-1})$ the equivalence class of circuits equivalent to $C$, which is also called the \emph{cycle} corresponding to the circuit $C$.

Let $C_1(G,\mbbZ)$ be the free abelian group on $\vE_\mfrako$, which can be equivalently considered as a $\mbbZ$-module on $\vE$ with $1\cdot \ve^{-1} = (-1)\cdot \ve$ for all $\ve\in \vE$. For $\alpha = \sum_{\ve\in \vE_\mfrako(G)}c_\ve\cdot \ve\in C_1(G,\mbbZ)$ with respect to any orientation $\mfrako$, the degree of $\alpha$ at $\ve\in\vE(G)$, denoted by $\deg_\alpha(\ve)$, is defined by $\deg_\alpha(\ve)=c_\ve$ if $\ve\in \vE_\mfrako(G)$ and $\deg_\alpha(\ve)=-c_\ve$ if $\ve\in \vE(G)\setminus\vE_\mfrako(G)$. For each $v\in V(G)$, let $\deg^+_{\mfrako,\alpha}(v) = \sum_{\ve\in \vE_\mfrako(G), \ve(0)=v}\deg_\alpha(\ve)$, $\deg^-_{\mfrako,\alpha}(v) = \sum_{\ve\in \vE_\mfrako(G), \ve(1)=v}\deg_\alpha(\ve)$ and $\deg_\alpha(v) = \sum_{\ve\in \vE(G),\ve(0)=v}\deg_\alpha(\ve)$. Note that  $\deg_\alpha(v)=\deg^+_{\mfrako,\alpha}(v) - \deg^-_{\mfrako,\alpha}(v)$ for any orientation $\mfrako$ on $G$.

\begin{definition} \label{D:homology}
We say  $\alpha \in C_1(G,\mbbZ)$ is a \emph{circulation} or a \emph{homology class} if  for each vertex $v\in V$, the \emph{balancing condition}  $\deg_\alpha(v)=0$  is satisfied. The set of all circulations form a group, called  the \emph{first homology group} of $G$ and denoted by $H_1(G,\mbbZ)$.
The  \emph{abelianization} of a path $P=\va_1\cdots \va_l$ is defined as   $P^{\ab}:=\va_1+\cdots+\va_l \in C_1(G,\mbbZ)$. 
\end{definition}

A \emph{metric} on $G$ is a function $\mu: E(G)\to \mbbR_{>0}$. We may also treat $\mu$ as a function on  $\vE(G)$ with $\mu(\ve)=\mu(\ve^{-1}):=\mu(e)$ for all $\ve\in\vE(G)$ and its underlying edge $e\in E(G)$. 
The \emph{$\mu$-length} of a walk $P=\va_1\cdots\va_l$ is straightforwardly defined as $\tau_\mu(P):=\sum_{i=1}^l \mu(\va_i)$. If $\mu$ is trivial (assigning $1$ to each edge), then we simply call the $\mu$-length of $P$ the \emph{length} of $P$, denoted by $\tau(P)=l$. Let $\mcalC_G(l):=\{C \in  \mcalC_G \mid \tau(C)=l\}$, $\mcalC_G(\alpha):=\{C \in  \mcalC_G \mid  C^{\ab}=\alpha \}$ and $\mcalC_G(\alpha,l):=\{C \in  \mcalC_G \mid \tau(C)=l,\ C^{\ab}=\alpha \}$. In other words, $\mcalC_G(l)$ is the 
set of all circuits of length $l$ on $G$, $\mcalC_G(\alpha)$ is the set of circuits $C$ on $G$ such that the abelianization of $C$ is $\alpha$,  and $\mcalC_G(\alpha,l)$ is the set of all circuits $C$ of length $l$ on $G$ such that the abelianization of $C$ is $\alpha$.

\begin{problem}[\textbf{Homological Circuit Detection Problems}] \label{Pr:HCDP}
 Consider a circulation $\alpha\in H_1(G,\mbbZ)$ on a connected graph $G$. We have the following \emph{Homological Circuit Detection Problems} with respect to $\alpha$:
\begin{enumerate}[(i)]
\item  Find a circuit in $\mcalC_G(\alpha)$;
\item Find a circuit in $\mcalC_G(\alpha, l)$ for a given length $l$;
\item Find a circuit in $\mcalC_G(\alpha)$ of minimum length. Or equivalently, determine the smallest $l$ such that  $\mcalC_G(\alpha,l)$ is non-empty and find a circuit in  $\mcalC_G(\alpha,l)$. 
\item For a metric $\mu$ of $G$, find a circuit in $\mcalC_G(\alpha)$ of minimum $\mu$-length. (Note that (iii) generalizes to (iv), which we call the \emph{Homological Shortest Circuit Detection Problem}. ) 
\end{enumerate}
\end{problem}

In this paper, we focus on the Homological Shortest Circuit Detection Problem (HSCDP) for the case $\alpha\neq 0$. Our solutions are related to a notion of direction-consistent circuits/cycles, as defined in below. 

\begin{definition}
For an orientation $\mfrako$ of $G$, we say a walk P=$\va_1\cdots\va_L$ is \emph{$\mathfrak{o}$-admissible} if it is a walk on the digraph $G_\mathfrak{o}$, or equivalently $\va_i\in \vE_\mathfrak{o}(G)$ for $i=1,\cdots,L$.  A walk is called a \emph{direction-consistent walk} if it is $\mathfrak{o}$-admissible with respect to a certain orientation $\mathfrak{o}$. If a circuit $C$ is direction-consistent, we say $C$ is a \emph{direction-consistent circuit} and $[C]$ is a \emph{direction-consistent cycle}. 
\end{definition}

The paper is organized as follows: in Section~\ref{S:Prelim}, we define some basic notions; in Section~\ref{S:DCC}, we solve the problem of detecting and counting direction-consistent circuits, as generalization of such problems for Eulerian circuits on digraphs; in Section~\ref{S:HSCDP}, we give a full solution to HSCDP; in Section~\ref{S:1cTRP}, as an application, we propose the one-carrier Transportation Routing Problem and provide a solution based on the solution to HSCDP. 

\section{Basic notions} \label{S:Prelim}
We first introduce some related notions. 
\begin{definition}
A \emph{partial orientation} $\mfrako$ of $G$ is an orientation of a subgraph $H$ of $G$ with no isolated vertices. We say $H$ is the \emph{support} of $\mfrako$, i.e.,  $\supp(\mfrako):=H$.  (In this sense, $\mfrako$ is considered both as a partial orientation of $G$ and an orientation of $\supp(\mfrako)$.) We say $\mfrako$ is \emph{connected} if $\supp(\mfrako)$ is connected. 
\end{definition}

\begin{definition} \label{D:lift}
Consider $\alpha\in C_1(G,\mbbZ)$ and suppose $\alpha\neq 0$. 
\begin{enumerate}[(i)]
\item $\alpha$ induces uniquely a partial orientation $\mfrako(\alpha)$ of $G$, i.e., $\alpha=\sum_{\ve\in \vE_{\mfrako(\alpha)}(H)}c_{\ve}\cdot\ve$ where $H=\supp(\mfrako(\alpha))$ with all the coefficients $c_{\ve}>0$. The \emph{support} of $\alpha$ is $\supp(\alpha):=\supp(\mfrako(\alpha))$.  We also say $\alpha$ is \emph{connected} if $\supp(\alpha)$ is connected, and $\alpha$ is \emph{universal}  on $G$ if $\supp(\alpha)=G$. (Clearly $\alpha$ is universal  is equivalent to $\mfrako(\alpha)$ is an orientation on $G$.)
\item For $\alpha=\sum_{\ve\in \vE_{\mfrako(\alpha)}(\supp(\alpha))}c_{\ve}\cdot\ve$, we call a graph $\widetilde{G(\alpha)}$  the \emph{lift} of $G$ corresponding to $\alpha$, which is formed by replacing each edge $e$ in $E(\supp(\alpha))$ by $c_\ve$ parallel edges where $e$ is the underlying edge of $\ve$, and call a partial orientation $\widetilde{\mfrako(\alpha)}$ on $\widetilde{G(\alpha)}$ the \emph{lift} of  $\mfrako(\alpha)$ where all aforementioned $c_\ve$ parallel edges in $\widetilde{G(\alpha)}$ are oriented the same way as $\ve$ in $G$ for all $\ve\in \vE_{\mfrako(\alpha)}(\supp(\alpha))$. 
\item The out-degree of $\alpha$ at $v\in V(G)$ is $\deg_\alpha^+(v):=\sum_{\ve\in \vE_{\mfrako(\alpha)}(\supp(\alpha)), ,\ve(0)=v}c_{\ve}$, and the in-degree of $\alpha$ at $v$ is $\deg_\alpha^-(v):=\sum_{\ve\in \vE_{\mfrako(\alpha)}(\supp(\alpha)), ,\ve(1)=v}c_{\ve}$. 
\item  If $\alpha$ is universal on $G$, then we call $\alpha$ is \emph{a (nonwhere-zero) flow} on $G$ if in addition $\alpha\in H_1(G,\mbbZ)$.
\end{enumerate}
\end{definition}

\begin{definition} \label{D:L1}
For a metric $\mu$ of $G$, we may define an $L^1$ norm on $C_1(G,\mbbZ)$ as $\Vert \alpha \Vert_\mu = \sum_{\ve\in \vE_{\mfrako}(G)}|c_\ve|\tau_\mu(\ve) $ for $\alpha = \sum_{\ve\in \vE_{\mfrako}(G)}c_\ve\cdot \ve$. In particular, if $\mu$ is trivial, then we simply write $\Vert\cdot \Vert_\mu$ as $\Vert\cdot\Vert$.
\end{definition}

The following lemma can be easily verified by definition. 

\begin{lemma} \label{L:ineq}
 $\tau_\mu(C)\geq \Vert C^{\ab}\Vert_\mu$ for all $C\in \mcalC_G$. 
\end{lemma}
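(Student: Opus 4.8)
The plan is to prove the inequality by a direct edge-by-edge bookkeeping that compares the contribution of each edge to $\tau_\mu(C)$ with its contribution to $\Vert C^{\ab}\Vert_\mu$. First I would fix an arbitrary orientation $\mfrako$ of $G$, so that both the $\mu$-length and the norm are computed against the same reference edges $\vE_\mfrako(G)$. Writing $C=\va_1\cdots\va_l$, for each edge $e\in E(G)$ with positively oriented representative $\ve_{\mfrako,e}\in\vE_\mfrako(G)$, I would introduce two nonnegative integers: $p_e$, the number of indices $i$ with $\va_i=\ve_{\mfrako,e}$, and $q_e$, the number of indices $i$ with $\va_i=\ve_{\mfrako,e}^{-1}$. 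Since $\mu(\ve)=\mu(\ve^{-1})=\mu(e)$, every traversal of $e$ in either direction contributes exactly $\mu(e)$ to the $\mu$-length, regardless of orientation.

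With this notation the two quantities become transparent. Grouping the sum defining $\tau_\mu(C)=\sum_{i=1}^l\mu(\va_i)$ by underlying edge gives
\begin{equation*}
\tau_\mu(C)=\sum_{e\in E(G)}(p_e+q_e)\,\mu(e).
\end{equation*}
On the other hand, by the sign convention in Definition~\ref{D:homology} the coefficient of $\ve_{\mfrako,e}$ in $C^{\ab}=\va_1+\cdots+\va_l$ is $c_{\ve_{\mfrako,e}}=p_e-q_e$ (each forward traversal adds $1$, each reverse traversal adds $-1$), so the definition of the $L^1$ norm yields
\begin{equation*}
\Vert C^{\ab}\Vert_\mu=\sum_{e\in E(G)}|p_e-q_e|\,\mu(e).
\end{equation*}

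The inequality then reduces to a termwise comparison: for every edge $e$ one has $p_e+q_e\geq |p_e-q_e|$, which is immediate because $p_e,q_e\geq 0$. Multiplying by $\mu(e)>0$ and summing over $E(G)$ gives $\tau_\mu(C)\geq\Vert C^{\ab}\Vert_\mu$, as desired. There is no real obstacle here; the only point requiring care is to keep the sign conventions consistent between the abelianization, the degree $\deg_\alpha$, and the norm, so that the coefficient of each reference edge is correctly identified as the signed count $p_e-q_e$. I would also note in passing that equality holds precisely when $p_e q_e=0$ for every edge $e$, i.e.\ when $C$ never traverses any edge in both directions, which foreshadows the role of direction-consistent circuits in the subsequent sections.
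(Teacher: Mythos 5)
Your proof is correct and is precisely the verification the paper has in mind: the paper states this lemma without proof (``can be easily verified by definition''), and your edge-by-edge bookkeeping---$\tau_\mu(C)=\sum_e(p_e+q_e)\mu(e)$ versus $\Vert C^{\ab}\Vert_\mu=\sum_e|p_e-q_e|\mu(e)$, compared termwise via $p_e+q_e\geq|p_e-q_e|$---is exactly that verification carried out. Your remark that equality holds iff no edge is traversed in both directions also correctly anticipates the role of direction-consistent circuits in Lemma~\ref{L:dcc}.
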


\section{Detecting and counting direction-consistent circuits for connected circulations} \label{S:DCC}
The notion of direction-consistent circuits on a graph can be considered as a generalization of the notion of Eulerian circuits on an Eulerian digraph. In this section, we will discuss the detecting and counting problems of direction-consistent circuits for connected circulations $\alpha\in H_1(G,\mbbZ)$, with results generalizing the Hierholzer's algorithm for detecting Eulerian circuits and the BEST theorem for counting Eulerian circuits. 

\subsection{Detecting direction-consistent circuits: generalized Hierholzer's algorithm}
Hierholzer's Algorithm~\cite{BLW1998graph} is a classic method for finding Eulerian circuits on an Eulerian graph $G$, proposed by Carl Hierholzer in 1873. Hierholzer's Algorithm   is highly efficient, taking linear time $O(|E|)$ where $|E|$ is the number of edges on $G$.  It constructs paths incrementally until an Eulerian circuit is formed, and can be described as follows:  
\begin{enumerate}
\item Choose a starting point: Start from any vertex in $G$.
\item Construct an initial path: Traverse unvisited edges from the current vertex, marking them as visited, until you return to the starting vertex, forming a closed path.
\item Handle remaining edges: Check vertices in the current path. If a vertex has unvisited edges, start a new closed path from that vertex and embed it into the existing path.
\item Complete the path: Repeat the process until all edges are visited, resulting in the complete Eulerian circuit on $G$. 
\end{enumerate}

The following lemma can be straightforwardly verified. 
\begin{lemma} \label{L:dcc}
For each nonzero $\alpha\in H_1(G,\mbbZ)$, $\mcalC_G(\alpha,l)$ is empty for all $l<\Vert\alpha\Vert$, and if $\mcalC_G(\alpha,\Vert\alpha\Vert)$ is nonempty, then $\alpha$ is connected and all circuits in $\mcalC_G(\alpha,\Vert\alpha\Vert)$ are direction-consistent.
\end{lemma}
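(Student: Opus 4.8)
The plan is to reduce everything to Lemma~\ref{L:ineq} together with a careful reading of its equality case. First I would record the edge-by-edge bookkeeping underlying that inequality: writing $C=\va_1\cdots\va_l$ and, for each underlying edge $e$ with a chosen positive orientation $\ve$, setting $p_e=\#\{i:\va_i=\ve\}$ and $q_e=\#\{i:\va_i=\ve^{-1}\}$, one has $\tau(C)=\sum_e(p_e+q_e)$ while $C^{\ab}=\sum_e(p_e-q_e)\ve$, so that $\Vert C^{\ab}\Vert=\sum_e|p_e-q_e|$. Since $p_e+q_e\geq|p_e-q_e|$ with equality exactly when $\min(p_e,q_e)=0$, this recovers Lemma~\ref{L:ineq} and, more importantly, pins down the equality case: $\tau(C)=\Vert C^{\ab}\Vert$ holds if and only if $C$ never traverses any edge in both of its orientations.

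For the first assertion I would argue by contraposition. If $\mcalC_G(\alpha,l)$ is nonempty, pick $C$ in it; then $l=\tau(C)\geq\Vert C^{\ab}\Vert=\Vert\alpha\Vert$ by Lemma~\ref{L:ineq}, so no circuit with $C^{\ab}=\alpha$ can have length below $\Vert\alpha\Vert$. This is exactly the claim that $\mcalC_G(\alpha,l)=\emptyset$ for every $l<\Vert\alpha\Vert$.

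For the second assertion, take any $C\in\mcalC_G(\alpha,\Vert\alpha\Vert)$. Then $\tau(C)=\Vert\alpha\Vert=\Vert C^{\ab}\Vert$, so the equality analysis applies: $\min(p_e,q_e)=0$ for every edge $e$, i.e. $C$ uses each edge in a single direction. Orienting every edge traversed by $C$ in the direction $C$ uses it, and orienting the remaining edges of $G$ arbitrarily, produces an orientation $\mfrako$ under which each $\va_i$ is positively oriented; hence $C$ is $\mfrako$-admissible and therefore direction-consistent. This establishes the direction-consistency claim for all elements of $\mcalC_G(\alpha,\Vert\alpha\Vert)$.

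Finally, for connectedness of $\alpha$, I would note that $\min(p_e,q_e)=0$ forces the coefficient $p_e-q_e$ of $\ve$ in $\alpha=C^{\ab}$ to be nonzero precisely for the edges $C$ actually traverses, so $\supp(\alpha)$ coincides with the set of edges appearing in $C$. Because $C$ is a single closed walk, these edges together with their endpoints form a connected subgraph, whence $\supp(\alpha)$ is connected, i.e. $\alpha$ is connected. I do not anticipate a real obstacle here; the only step requiring genuine care is the equality analysis of Lemma~\ref{L:ineq}, and in particular ensuring that loops and repeatedly used edges are absorbed into the same $p_e,q_e$ counting rather than treated by an ad hoc separate argument.
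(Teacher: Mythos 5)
Your proof is correct and is essentially the argument the paper intends: the paper states Lemma~\ref{L:dcc} as ``straightforwardly verified'' with no written proof, and the natural verification is exactly your route through Lemma~\ref{L:ineq} and its equality case. Your equality analysis ($\min(p_e,q_e)=0$ for every edge, so that $\supp(C^{\ab})$ coincides with the edge set of the closed walk $C$, giving connectedness, while orienting each traversed edge in the direction $C$ uses it and the rest arbitrarily gives the required orientation for direction-consistency) is sound and handles loops and multiple edges without issue.
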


We will show in Theorem~\ref{T:DCC} that the converse statement is also true, i.e., as long as a nonzero  $\alpha\in H_1(G,\mbbZ)$ is connected, we may always find a direction-consistent circuit whose abelianization is $\alpha$. To prove this theorem, we may apply  an approach analogous to Hierholzer's algorithm to construct a desirable direction-consistent circuit. The procedures are shown in Algorithm~\ref{A:DCC}.

We will need the following lemma in our proof of Theorem~\ref{T:DCC}, which can be easily verified. 
\begin{lemma} \label{L:balancing}
For $C\in \mcalC_G$, we must have $C^{\ab}\in H_1(G,\mbbZ)$. For a non-closed path $P$, if the balancing condition (conf. Definition~\ref{D:homology}) is not satisfied for $P^{\ab}$ at vertex $v$, then $v$ is either $P(0)$ or $P(1)$. Specifically, we have  $\deg_{P^{\ab}}(P(0))=1$ and $\deg_{P^{\ab}}(P(1))=-1$. 
\end{lemma}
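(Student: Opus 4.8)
The plan is to compute $\deg_{P^{\ab}}(v)$ explicitly at every vertex $v$ and then read off both assertions, noting that a circuit is simply the special case of a closed walk. Since $\deg_{(\cdot)}(v)$ is linear in its subscript, I would first isolate the contribution of a single oriented edge. For $\alpha=\va$ a single edge, one verifies directly from Definition~\ref{D:homology} that $\deg_{\va}(v)=\mathbbm{1}[\va(0)=v]-\mathbbm{1}[\va(1)=v]$, where the two indicators correctly cancel when $\va$ is a loop at $v$ (so that no separate case is needed). Summing over the edges of $P=\va_1\cdots\va_l$ then gives $\deg_{P^{\ab}}(v)=\#\{i:\va_i(0)=v\}-\#\{i:\va_i(1)=v\}$.

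Next I would exploit the defining consecutiveness condition $\va_{i+1}(0)=\va_i(1)$ to run a telescoping argument. Writing $v_0=P(0)$, $v_i=\va_i(1)=\va_{i+1}(0)$, and $v_l=P(1)$ for the sequence of vertices traversed, so that $\va_i$ runs from $v_{i-1}$ to $v_i$, the two counts above become $\#\{0\le j\le l-1:v_j=v\}$ and $\#\{1\le j\le l:v_j=v\}$ respectively. These differ only in whether the index $0$ (resp. $l$) is included, so their difference collapses to $\mathbbm{1}[v_0=v]-\mathbbm{1}[v_l=v]=\mathbbm{1}[P(0)=v]-\mathbbm{1}[P(1)=v]$.

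From this single formula everything follows. When $P=C$ is closed we have $P(0)=P(1)$, so the two indicators cancel at every $v$ and hence $C^{\ab}\in H_1(G,\mbbZ)$. When $P$ is non-closed, $P(0)\neq P(1)$, so $\deg_{P^{\ab}}(v)=0$ unless $v\in\{P(0),P(1)\}$, with the stated values $\deg_{P^{\ab}}(P(0))=1$ and $\deg_{P^{\ab}}(P(1))=-1$.

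I expect no serious obstacle here; the lemma is essentially bookkeeping. The only two points that require a little care are the uniform treatment of loops in the single-edge computation (so that the clean indicator formula holds with no exceptional case) and the handling of the half-open index ranges $\{0,\dots,l-1\}$ versus $\{1,\dots,l\}$ in the telescoping step. Both become transparent once the vertex sequence $v_0,\dots,v_l$ is introduced, after which the result is immediate.
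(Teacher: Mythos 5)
Your proof is correct. The paper gives no argument at all for this lemma---it merely asserts that it ``can be easily verified''---and your computation, reducing to the single-edge identity $\deg_{\va}(v)=\mathbbm{1}[\va(0)=v]-\mathbbm{1}[\va(1)=v]$ (with loops handled uniformly) and then telescoping along the vertex sequence to get $\deg_{P^{\ab}}(v)=\mathbbm{1}[P(0)=v]-\mathbbm{1}[P(1)=v]$, is precisely the routine verification the authors intend.
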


\begin{theorem} \label{T:DCC}
For a nonzero $\alpha\in H_1(G,\mbbZ)$, $\mcalC_G(\alpha,\Vert\alpha\Vert)$ is nonempty if and only if $\alpha$ is connected. 
\end{theorem}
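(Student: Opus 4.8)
The plan is to prove both directions, with the forward direction being essentially Lemma~\ref{L:dcc} (if $\mcalC_G(\alpha,\Vert\alpha\Vert)$ is nonempty then $\alpha$ is connected), so the substantive work lies in the converse: assuming $\alpha$ is a nonzero connected circulation, construct a direction-consistent circuit $C$ with $C^{\ab}=\alpha$ and $\tau(C)=\Vert\alpha\Vert$. By Lemma~\ref{L:ineq} any such circuit is automatically of minimum length, so it suffices to produce one circuit realizing $\alpha$ exactly (with no ``wasted'' edges traversed in both directions).

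The key reduction is to pass to the lift $\widetilde{G(\alpha)}$ from Definition~\ref{D:lift}(ii), equipped with the lifted orientation $\widetilde{\mfrako(\alpha)}$. The point of lifting is that each positively oriented edge $\ve$ with coefficient $c_\ve$ becomes $c_\ve$ parallel directed edges, so that a direction-consistent circuit realizing $\alpha$ corresponds exactly to an Eulerian circuit on this lifted digraph. The balancing condition $\deg_\alpha(v)=0$ translates precisely into $\deg^+(v)=\deg^-(v)$ at every vertex of the lifted digraph, which together with connectedness of $\supp(\alpha)$ (hence of the lift) is exactly the classical criterion for the existence of an Eulerian circuit on a digraph. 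Thus I would invoke (the digraph analogue of) Hierholzer's algorithm, realized here as Algorithm~\ref{A:DCC}, to build such a circuit.

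The concrete steps I would carry out are: first, verify that the lifted digraph $\widetilde{G(\alpha)}$ satisfies the in-degree/out-degree balance at every vertex, using $\deg_\alpha(v)=\deg^+_\alpha(v)-\deg^-_\alpha(v)=0$ and the definitions of out- and in-degree of $\alpha$ in Definition~\ref{D:lift}(iii); second, note that $\alpha$ connected means $\widetilde{G(\alpha)}$ is connected; third, run the generalized Hierholzer procedure, which constructs closed subwalks and splices them together at shared vertices until all $\Vert\alpha\Vert$ lifted edges are consumed. The correctness argument is the invariant maintained by the algorithm: at each stage the partially built path closes up into a circuit (so its abelianization is balanced by Lemma~\ref{L:balancing}), and whenever unvisited edges remain, the degree-balance forces a not-yet-exhausted vertex to lie on the current circuit, guaranteeing a new closed subwalk can be attached. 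When the process terminates, every lifted edge has been used exactly once, so projecting back down to $G$ yields a circuit $C$ that traverses each $\ve$ exactly $c_\ve$ times in the direction prescribed by $\mfrako(\alpha)$; hence $C$ is direction-consistent, $C^{\ab}=\alpha$, and $\tau(C)=\Vert\alpha\Vert$.

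I expect the main obstacle to be the termination-and-coverage argument for the splicing step, rather than any single algebraic identity. Specifically, the delicate point is to show that the algorithm cannot get ``stuck'' with unvisited edges while the current path has already returned to its base vertex: one must argue that the set of unvisited edges itself forms a balanced subgraph, so that any vertex incident to an unvisited edge still has an unvisited outgoing edge, and that connectedness forces at least one such vertex to appear on the already-constructed circuit. This is precisely where connectedness of $\alpha$ is indispensable, and it is the place where the proof must do real work beyond quoting the classical Eulerian theorem verbatim, since here the ``multiplicities'' $c_\ve$ play the role of edge multiplicities in the lift and the bookkeeping must be tracked carefully through the projection back to $G$.
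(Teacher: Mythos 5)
Your proposal is correct, but it proves the nontrivial direction by a genuinely different route than the paper. The paper's proof of Theorem~\ref{T:DCC} never passes to the lift: it runs Algorithm~\ref{A:DCC} directly on $G$, maintaining the invariant $P^{\ab}+\beta=\alpha$ for the residual circulation $\beta$, and establishes correctness via Lemma~\ref{L:balancing} (if the growing direction-consistent path were not closed, the imbalance at its terminal vertex would force an available outgoing residual edge, so the inner loop can only halt at a circuit) together with a support-disjointness argument: if $\beta\neq 0$ but no vertex of $P$ had an outgoing residual edge, then $\supp(P^{\ab})$ and $\supp(\beta)$ would be vertex-disjoint subgraphs covering $\supp(\alpha)$, contradicting connectedness of $\alpha=P^{\ab}+\beta$. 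You instead materialize the lift $\widetilde{G(\alpha)}$ with orientation $\widetilde{\mfrako(\alpha)}$, check that the balancing condition makes it an Eulerian digraph and that connectedness of $\supp(\alpha)$ makes it connected, invoke the classical Euler--Hierholzer theorem as a black box, and project the resulting Eulerian circuit back to $G$. Interestingly, this lift correspondence is exactly what the paper uses later for the counting result (Theorem~\ref{T:BEST}), just not for Theorem~\ref{T:DCC}. What your reduction buys is brevity: the splicing/termination argument you flag as the main obstacle is precisely the content of the classical theorem, so no generalized algorithm needs to be re-proved correct. What the paper's route buys is an algorithm operating on $G$ itself, manipulating coefficients $c_\ve$ rather than $\Vert\alpha\Vert$ parallel edges, whose claims are reused downstream in Algorithm~2 and Theorem~\ref{T:HSCDP}. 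Two small points to tighten: (1) you should state explicitly that the projected walk has no backtracks or tail---this is automatic because a direction-consistent walk can never traverse both $\ve$ and $\ve^{-1}$, but it is needed since ``circuit'' in this paper excludes backtracks; and (2) the correspondence between Eulerian circuits on the lift and elements of $\mcalC_G(\alpha,\Vert\alpha\Vert)$ is many-to-one rather than exact (the paper quantifies the fiber size as $\prod_{\ve\in\vE_{\mfrako(\alpha)}(G)}\left(\deg_\alpha(\ve)!\right)$ in Theorem~\ref{T:BEST}); for existence only surjectivity matters, so your argument stands, but ``corresponds exactly'' overstates it.
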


\begin{proof}
It is clear that $\mcalC_G(\alpha,\Vert\alpha\Vert)$ being nonempty implies that $\alpha$ is connected, as stated in Lemma~\ref{L:dcc}. 

Now assuming that $\alpha$ is connected, we want to find a direction-consistent circuit in $\mcalC_G(\alpha,\Vert\alpha\Vert)$. Starting from any vertex $v\in V(\supp(\alpha))$, we can derive a direction consistent path $C$  by running Algorithm~1. It suffices to prove the claims in Algorithm~1. 
\begin{algorithm}
\label{A:DCC}
\caption{(Generalized Hierholzer's algorithm). Detecting direction-consistent circuits}
\begin{algorithmic}[1]
\State \textbf{Input:} A connected circulation  $\alpha\in H_1(G,\mbbZ)$   on a graph $G$
\State \textbf{Output:} A direction consistent circuit $C\in\mcalC_G(\alpha,\Vert\alpha\Vert)$
\State \textbf{Some notations:} 
\begin{enumerate}
\item $P(0)$ and $P(1)$ denote the initial and terminal vertices of a path $P$ respectively. 
\item A path concatenation notation will be used, namely, for $P_1 = \va_1\cdots \va_s$ and $P_2 = \vb_1\cdots \vb_t$, if $P_1(1)=P_2(0)$, then $\concat(P_1,P_2)=\va_1\cdots \va_s\vb_1\cdots \vb_t$. 
\item For a vertex $v$ and a partial orientation $\mfrako$, $\Out_\mfrako(v):=\{\ve\in\vE_\mfrako(\supp(\mfrako)) \mid \ve(0)=v\}$.
\item Translating a circuit $C=\va_0\cdots\va_{L-1}\in \mcalC(G)$ by $k\in\mbbZ$, one obtains $\mathfrak{t}_k(C)=\va_{t_0}\cdots \va_{t_{L-1}}$ where $t_i  =( i+k) \mod L$.
\end{enumerate}
\State \textbf{Initialization:} Choose an arbitrary vertex $v\in V(\supp(\alpha))$ and let $\mfrako=\mfrako(\alpha)$ and $\beta=\alpha$. Let $P$ be an empty path. 
\State \textbf{Procedure:}
\While{$\beta\neq 0$}
\While{$\Out_\mfrako(v)\neq \emptyset$}
\State Choose arbitrarily an oriented edge $\ve\in \Out_\mfrako(v)$
 \State $P\gets \concat(P,\ve)$ 
 \State $v\gets \ve(1)$
 \State $\beta \gets \beta -\ve$
 \State $\mfrako\gets \mfrako(\beta)$
 \EndWhile
 \State $k\gets 0$ \Comment{\textbf{Claim 1:} At this point, $P$ is a circuit.}
 \While{$k< \tau(P)$ and $\Out_\mfrako(v)= \emptyset$}
 \State $k\gets k+1$
 \State $P\gets \mathfrak{t}_1(P)$
 \State $v\gets P(1)$
 \EndWhile
 \EndWhile \Comment{\textbf{Claim 2:} The whole loop will always be exited.} 
 \State $C=P$  \Comment{\textbf{Claim 3:} At this point, $C\in\mcalC_G(\alpha,\Vert\alpha\Vert)$.}  
\State \Return{$C$}  
 \end{algorithmic}
 \end{algorithm}

We first note that at Step~9 of Algorithm~\ref{A:DCC}, we concatenate path $P$ with an oriented edge $\ve$, and at Step~11, we deduct $\ve$ from $\beta$, this means the relation $P^{\ab}+\beta=\alpha$ is always maintained after consecutively executing these two operations. 

For Claim~1, suppose for contradiction that at Step~14, $P$ is not a circuit. Then $P(0)\neq P(1)$, and by Lemma~\ref{L:balancing}, $\deg_{P^{\ab}}(P(1))=-1$. This means $\deg_\beta(P(1))=1$ since $\beta=\alpha-P^{\ab}$ and $\alpha$ is a circulation. Hence $\Out_{\mfrako(\beta)}(P(1))$ must be nonempty, a contradiction. 

Now assume that at Step~14, we derive a circuit $P$ and at this point $\beta = \alpha-P^{\ab}$. Claim that $\Out_{\mfrako(\beta)}(v)=\emptyset$ for all vertex $v$ on $P$ if and only if $\beta=0$. This is also clear, since for all $v\in V(\supp(P^{\ab}))\bigcap V(\supp(\beta))$, $\Out_{\mfrako(\beta)}(v)$ must be nonempty. This means the subgraphs $\supp(P^{\ab})$ and $\supp(\beta)$ must be disjoint. But $\alpha=P^{\ab}+\beta$ is connected as assumed. 

Therefore, the ``while'' loop at Step~20 will always be exited with $\beta = 0$  and $P^{\ab}=\alpha$ as claimed in Claim~2. Consequently, at  Step~21, a direction-consistent circuit $C\in\mcalC_G(\alpha,\Vert\alpha\Vert)$ will be returned as claimed in Claim~3. 

\end{proof}

\begin{example}\label{E:K4-detect}
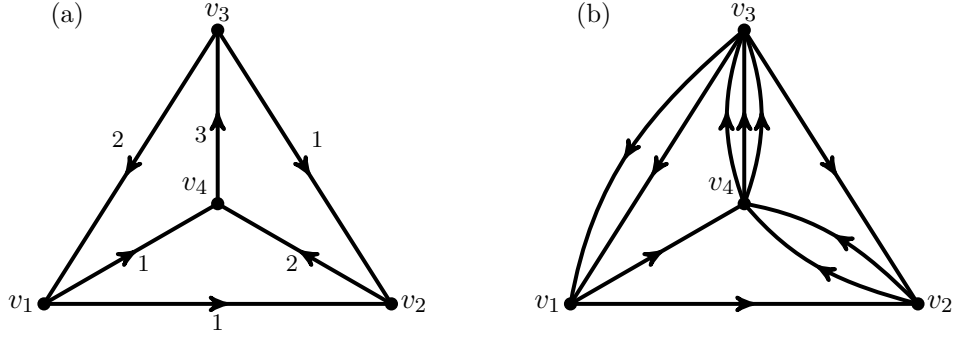
\begin{figure}
 \centering
\begin{tikzpicture}[>=to,x=2cm,y=2cm]

\tikzset{
        midarrow/.style={
            postaction={decorate},
            decoration={
                markings,
                mark=at position 0.53 with {\arrow[line width=1.5pt]{stealth'}}
            }
        }
    }

\begin{scope}[shift={(0,0)}] 
\draw (-1, 1.25) node {(a)};

\coordinate (v1) at ($2/3*(-1.732,-1)$);
\coordinate (v2) at  ($2/3*(1.732,-1)$);
\coordinate (v3) at ($2/3*(0,1.732)$);
\coordinate (v4) at (0,0);

\path[-,font=\scriptsize,  line width=1.5pt, black]
(v1) edge[midarrow] node[pos=0.5,anchor=north,font=\small]{1} (v2)
(v1) edge[midarrow] node[pos=0.4,right=5pt,font=\small]{1} (v4)
(v2) edge[midarrow] node[pos=0.4,left=5pt,font=\small]{2} (v4)
(v3) edge[midarrow] node[pos=0.4,left=5pt,font=\small]{2} (v1)
(v3) edge[midarrow] node[pos=0.4,right=5pt,font=\small]{1} (v2)
(v4) edge[midarrow] node[pos=0.4,left,font=\small]{3} (v3);

\fill [black] (v1) circle (2.5pt);
\draw (v1) node[anchor=east] {\large $v_1$};
\fill [black] (v2) circle (2.5pt);
\draw (v2) node[anchor=west] {\large $v_2$};
\fill [black] (v3) circle (2.5pt);
\draw (v3) node[anchor=south] {\large $v_3$};
\fill [black] (v4) circle (2.5pt);
\draw (v4) node[anchor=south east] {\large $v_4$};
\end{scope}

\begin{scope}[shift={(3.5,0)}] 
\draw (-1, 1.25) node {(b)};

\coordinate (v1) at ($2/3*(-1.732,-1)$);
\coordinate (v2) at  ($2/3*(1.732,-1)$);
\coordinate (v3) at ($2/3*(0,1.732)$);
\coordinate (v4) at (0,0);

\path[-,font=\scriptsize,  line width=1.5pt, black]
(v1) edge[midarrow] (v2)
(v1) edge[midarrow]  (v4)
(v2) edge[midarrow,out=135,in=-10] (v4)
(v2) edge[midarrow,out=165,in=-50] (v4)
(v3) edge[midarrow] (v1)
(v3) edge[midarrow,out=-140,in=80]  (v1)
(v3) edge[midarrow] (v2)
(v4) edge[midarrow]  (v3)
(v4) edge[midarrow,out=70,in=-70] (v3)
(v4) edge[midarrow,out=110,in=-110] (v3);

\fill [black] (v1) circle (2.5pt);
\draw (v1) node[anchor=east] {\large $v_1$};
\fill [black] (v2) circle (2.5pt);
\draw (v2) node[anchor=west] {\large $v_2$};
\fill [black] (v3) circle (2.5pt);
\draw (v3) node[anchor=south] {\large $v_3$};
\fill [black] (v4) circle (2.5pt);
\draw (v4) node[anchor=south east] {\large $v_4$};
\end{scope}
\end{tikzpicture}
 \caption{An example for $K_4$.}
 \label{F:K4}
\end{figure}
Let $G$ be the complete graph $K_4$ with an orientation illustrated in Figure~\ref{F:K4}(a). The vertex set if $V(G)=\{v_1,v_2,v_3,v_4\}$, and we label the oriented edges in $\vE_\mfrako(G)$ from $v_i$ to $v_j$ as $\ve_{ij}$. Consider a connected circulation $\alpha=\ve_{12}+\ve_{14}+2\ve_{24}+2\ve_{31}+\ve_{32}+3\ve_{43}$. Let us run Algorithm~1 for $\alpha$. We let the path $P$ start from $v_1$. Initially we may let $P$ follow either $\ve_{14}$ or $\ve_{12}$. Choose to follow $\ve_{14}$. Keep extending $P$ in this way, we may get $P=\ve_{14}\ve_{43}\ve_{31}\ve_{12}\ve_{24}\ve_{43}\ve_{31}$. At this stage $P$ is a circuit and the ``while'' loop is exited, and we have to translate $P$ to find a ``non-saturated'' vertex. The first such a vertex we get is $v_4$. By translation, now $P$ becomes $\ve_{43}\ve_{31}\ve_{12}\ve_{24}\ve_{43}\ve_{31}\ve_{14}$. Extending $P$, we will finally get a direction-consistent circuit $C=\ve_{43}\ve_{31}\ve_{12}\ve_{24}\ve_{43}\ve_{31}\ve_{14}\ve_{43}\ve_{32}\ve_{24}$ such that $C^{\ab}=\alpha$. 
\end{example}

\subsection{Counting direction-consistent circuits: generalized BEST theorem}
The well-known BEST theorem, named after de Bruijn and van Aardenne-Ehrenfest~\cite{AB1951circuits} and Smith and Tutte~\cite{ST1941unicursal}, provides an efficient algorithm of counting Eulerian cycles on an Eulerian digraph $G_\mfrako$ (meaning that the in-degree and out-degree at each vertex of $G_\mfrako$ are identical). 

Recall that an arborescence $T$ of $G_\mfrako$ with a root vertex $w$ is a spanning tree of $G$ such that all edges  on $T$ are oriented towards $w$ with respect to $\mfrako$. 
Recall also that the Laplacian matrix of $G_\mfrako$, denoted $L(G_\mfrako)$, is defined as: (i) for $i\neq j$, $(L(G_\mfrako))_{ij}$ is the negation of the number of positively oriented edges with respect to $\mfrako$ with initial vertex $v_i$ and terminal vertex $v_j$, and (ii)  $(L(G_\mfrako))_{ii}=-\sum_{ j\neq i}(L(G_\mfrako))_{ij}$. Removing the row and column of  $L(G_\mfrako)$ with respect to $w$, we get the reduced Laplacian $L^{\red(w)}(G_\mfrako)$ at $w$.

 Denote by $\ec(G,\mfrako)$ the number of Eulerian cycles on $G_\mfrako$, and $\In_w(G,\mfrako)$ the number of arborescences $G_\mfrako$ rooted at $w$. Then the BEST theorem states that $\ec(G,\mfrako)=\In_w(G,\mfrako)\cdot \prod_{v\in V(G)}(\deg^+_\mfrako(v)-1)!$ where $\deg^+_\mfrako(v)$ is the out-degree of $G_\mfrako$ at $v\in V(G)$. Note that by a directed version of Kirchhoff's matrix-tree theorem, $\In_w(G,\mfrako)$ is actually independent of $w$, and can be efficiently computed as the determinant of the reduced Laplacian matrix $L^{\red(w)}(G_\mfrako)$. 

Before making a generalization of  the BEST theorem, we first introduce a notion of weighted Laplacian here. 

\begin{definition} \label{D:laplacian}
Consider a graph $G$. Let $\alpha\in C_1(G,\mbbZ)$ be universal. Let $c_\ve = \deg_\alpha(\ve)$ for all $\ve\in\vE(G)$. Then $\alpha=\sum_{\ve\in \vE_{\mfrako(\alpha)}(G)}c_{\ve}\cdot\ve$ with all $c_{\ve}>0$.  The \emph{weighted Laplacian} $L_\alpha(G)$ of $G$ with respect to $\alpha$ is defined as: $(L_\alpha(G))_{ij}=-\sum_{\ve\in\vE_{\mfrako(\alpha)}(G),\ve(0)=v_i,\ve(1)=v_j}c_\ve$ for all $i\neq j$, and $(L_\alpha(G))_{ii}=\sum_{\ve\in\vE_{\mfrako(\alpha)}(G),\ve(0)=v_i,\ve(1)\neq v_i}c_\ve$ for all diagonal entries. Removing the row and column of  $L_\alpha$ with respect to $w$, we get the \emph{reduced weighted Laplacian} $L_\alpha^{\red(w)}(G)$.
\end{definition}

\begin{lemma} \label{L:laplacian}
Let $\alpha\in C_1(G,\mbbZ)$  be universal. Then $L_\alpha(G)=L\left(\widetilde{G(\alpha)}_{\widetilde{\mfrako(\alpha)}}\right)$, and for any $w\in V(G)$, $L_\alpha^{\red(w)}(G)=L^{\red(v)}\left(\widetilde{G(\alpha)}_{\widetilde{\mfrako(\alpha)}}\right)$. Moreover, $\alpha$ is a flow on $G$ (meaning that $\alpha$ is in $H_1(G,\mbbZ)$ and universal) if and only if $\widetilde{G(\alpha)}_{\widetilde{\mfrako(\alpha)}}$ is an Eulerian digraph. 
\end{lemma}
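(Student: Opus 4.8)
The plan is to verify all three assertions by direct definition-chasing, comparing the relevant quantities entry-by-entry; each is simply a matter of unwinding the definitions of the lift $\widetilde{G(\alpha)}$, its orientation $\widetilde{\mfrako(\alpha)}$, and the two flavors of Laplacian.

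First, for the equality $L_\alpha(G)=L(\widetilde{G(\alpha)}_{\widetilde{\mfrako(\alpha)}})$, I would observe that the lift shares the vertex set $V(G)$ with $G$ (only edges are duplicated), so both matrices are indexed by the same $\{v_1,\dots,v_n\}$ and it suffices to match entries. For $i\neq j$, the number of positively oriented edges of the lift from $v_i$ to $v_j$ equals $\sum_{\ve:\ve(0)=v_i,\ve(1)=v_j}c_\ve$, since each such $\ve$ in $G$ is replaced by $c_\ve$ parallel copies oriented the same way; this is exactly $-(L_\alpha(G))_{ij}$, so the off-diagonal entries agree. For the diagonals, the standard Laplacian sets $(L)_{ii}=-\sum_{j\neq i}(L)_{ij}$; summing the already-matched off-diagonal counts over $j\neq i$ yields $\sum_{\ve:\ve(0)=v_i,\ve(1)\neq v_i}c_\ve$, which is precisely $(L_\alpha(G))_{ii}$. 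Loops are excluded from the diagonal on both sides — by the $\ve(1)\neq v_i$ condition on the left and by the restriction $j\neq i$ on the right — so they cause no discrepancy.

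Second, the reduced-Laplacian identity is then immediate: deleting the row and column indexed by $w$ is literally the same operation on both matrices, since $V(G)=V(\widetilde{G(\alpha)})$ and $w$ corresponds to the same index on both sides (the ``$v$'' in the statement being this same $w$). Third, for the Eulerian characterization I would translate the balancing condition into an in-/out-degree equality in the lift. For $\alpha$ universal, choosing $\mfrako=\mfrako(\alpha)$ gives $c_\ve=\deg_\alpha(\ve)>0$, and the identity $\deg_\alpha(v)=\deg^+_{\mfrako,\alpha}(v)-\deg^-_{\mfrako,\alpha}(v)$ specializes to $\deg_\alpha(v)=\deg_\alpha^+(v)-\deg_\alpha^-(v)$ in the notation of Definition~\ref{D:lift}. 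In the lift, the out-degree of $v$ equals $\sum_{\ve:\ve(0)=v}c_\ve=\deg_\alpha^+(v)$ and its in-degree equals $\sum_{\ve:\ve(1)=v}c_\ve=\deg_\alpha^-(v)$, with loops contributing equally to both. Hence $\alpha$ is a flow — universal and in $H_1(G,\mbbZ)$, so $\deg_\alpha(v)=0$ for every $v$ — exactly when every vertex of the lift has equal in- and out-degree, which is the definition of Eulerian digraph adopted here.

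I do not anticipate a genuine obstacle, as the argument is pure definition-chasing. The one spot demanding attention is the consistent treatment of loops and multiple edges across the two degree/Laplacian conventions; for that reason I would establish the off-diagonal counts first and \emph{derive} the diagonal entries from them via $(L)_{ii}=-\sum_{j\neq i}(L)_{ij}$, so that the two sides are forced to agree by construction rather than through a separate and error-prone computation.
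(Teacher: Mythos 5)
Your proposal is correct and follows essentially the same route as the paper, whose entire proof is the one-line assertion that the lemma is ``easily verifiable'' from Definitions~\ref{D:laplacian} and~\ref{D:lift}(ii); you have simply carried out that definition-chasing explicitly (matching off-diagonal entries, deriving the diagonals, handling loops, and translating the balancing condition into the in-/out-degree equality). Your reading of the statement's ``$\red(v)$'' as a typo for ``$\red(w)$'' is also the right interpretation.
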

\begin{proof}
Easily verifiable by the definitions of the Laplacian and reduced Laplacian of a digraph,  the weighted Laplacian and reduced weighted Laplacian (Definition~\ref{D:laplacian}), and lifts of graphs and orientations (Definition~\ref{D:lift}(ii)). 
\end{proof}

The following theorem, which we call the generalized BEST theorem, provides a formula of counting the total number of direction-consistent circuits with a prescribed abelianization.
 
\begin{theorem}[Generalized BEST Theorem] \label{T:BEST}
For a flow $\alpha$ on $G$ and any $w\in V(G)$, we have  
\begin{align*}
|\mathcal{C}_G(\alpha, \Vert\alpha\Vert)|&=\Vert\alpha\Vert\cdot \ec\left(\widetilde{G(\alpha)},\widetilde{\mfrako(\alpha)}\right)\cdot \prod_{\ve\in\vE_{\mfrako(\alpha)}(G)}\left(\deg_\alpha(\ve)!\right)^{-1} \\
&=\Vert\alpha\Vert\cdot \det\left(L_\alpha^{\red(w)}(G)\right)\cdot \prod_{v\in V(G)}(\deg^+_\alpha-1)! \cdot  \prod_{\ve\in\vE_{\mfrako(\alpha)}(G)}\left(\deg_\alpha(\ve)!\right)^{-1}.
\end{align*}
\end{theorem}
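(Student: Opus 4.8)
The plan is to reduce the count of direction-consistent circuits on $G$ to the classical count of Eulerian circuits on the lift $\widetilde{G(\alpha)}_{\widetilde{\mfrako(\alpha)}}$, and then invoke the ordinary BEST theorem together with Lemma~\ref{L:laplacian}. Write $N=\Vert\alpha\Vert$; since $\mu$ is trivial and all coefficients $\deg_\alpha(\ve)>0$, this is exactly the number of edges of the lift, which replaces each $e\in E(\supp(\alpha))=E(G)$ by $\deg_\alpha(\ve)$ parallel copies. Let $\pi$ be the projection sending each parallel copy of $e$ back to $e$; it carries oriented edges of $\widetilde{G(\alpha)}_{\widetilde{\mfrako(\alpha)}}$ to oriented edges of $G_{\mfrako(\alpha)}$ and hence carries walks to walks. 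We may assume $G$ (equivalently the lift) is connected, since otherwise $\mcalC_G(\alpha,N)$, $\ec$, and $\det\!\left(L_\alpha^{\red(w)}(G)\right)$ all vanish and the identity holds trivially.

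First I would set up the projection correspondence on \emph{linear} objects. Given any linear Eulerian circuit $\tilde C$ on $\widetilde{G(\alpha)}_{\widetilde{\mfrako(\alpha)}}$, its image $\pi(\tilde C)$ is a closed direction-consistent walk on $G$ traversing each $e$ exactly $\deg_\alpha(\ve)$ times, all along $\mfrako(\alpha)$; being direction-consistent it has no backtrack, so $\pi(\tilde C)\in\mcalC_G(\alpha,N)$. Conversely, by Lemma~\ref{L:dcc} together with Lemma~\ref{L:ineq} every $C\in\mcalC_G(\alpha,N)$ is direction-consistent and uses each $e$ exactly $\deg_\alpha(\ve)$ times along $\mfrako(\alpha)$, so it lifts. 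The key counting step is that the fibre of $\pi$ over a fixed $C$ has size $\prod_{\ve\in\vE_{\mfrako(\alpha)}(G)}\deg_\alpha(\ve)!$: at the $\deg_\alpha(\ve)$ positions of $C$ labelled by $e$ one must assign the $\deg_\alpha(\ve)$ parallel copies bijectively, and every such assignment yields a distinct valid Eulerian circuit on the lift. This gives
\[
\#\{\text{linear Eulerian circuits on }\widetilde{G(\alpha)}_{\widetilde{\mfrako(\alpha)}}\}=|\mcalC_G(\alpha,N)|\cdot\prod_{\ve\in\vE_{\mfrako(\alpha)}(G)}\deg_\alpha(\ve)!.
\]

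Next I would pass from linear Eulerian circuits to Eulerian cycles. Since each edge of the lift is used exactly once, no nontrivial rotation $\mathfrak{t}_k$ can fix an Eulerian circuit, so every Eulerian cycle has exactly $N$ linear representatives; hence the left-hand side above equals $N\cdot\ec\!\left(\widetilde{G(\alpha)},\widetilde{\mfrako(\alpha)}\right)=\Vert\alpha\Vert\cdot\ec\!\left(\widetilde{G(\alpha)},\widetilde{\mfrako(\alpha)}\right)$. Rearranging yields the first displayed equality of the theorem. The second equality then follows by applying the classical BEST theorem to $\widetilde{G(\alpha)}_{\widetilde{\mfrako(\alpha)}}$, which is Eulerian by Lemma~\ref{L:laplacian}: its out-degrees are $\deg^+_\alpha(v)$, so $\prod_v(\deg^+_\alpha-1)!$ is the product factor, and by Lemma~\ref{L:laplacian} its arborescence count $\In_w$ equals $\det\!\left(L_\alpha^{\red(w)}(G)\right)$, independent of $w$.

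The main obstacle is the careful bookkeeping distinguishing \emph{circuits} (linear sequences, the objects counted by $\mcalC_G(\alpha,N)$) from \emph{cycles} (rotation classes, the objects counted by $\ec$): I must justify both fibre sizes simultaneously, namely the factor $\prod_{\ve}\deg_\alpha(\ve)!$ arising from relabelling parallel copies and the factor $\Vert\alpha\Vert$ arising from rotation, and verify there are no hidden symmetries collapsing either fibre. Concretely, one needs that distinct copy-assignments produce distinct lift circuits and that Eulerian circuits have trivial rotational stabiliser; both follow cleanly from the fact that every lift edge is distinct and is used exactly once, but they must be stated rather than assumed.
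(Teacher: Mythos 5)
Your proposal is correct and follows essentially the same route as the paper's proof: the bijective correspondence (with fibre size $\prod_{\ve}\deg_\alpha(\ve)!$) between Eulerian circuits on the lift $\widetilde{G(\alpha)}_{\widetilde{\mfrako(\alpha)}}$ and circuits in $\mcalC_G(\alpha,\Vert\alpha\Vert)$, the factor $\Vert\alpha\Vert$ from passing between circuits and rotation classes, and the classical BEST theorem plus Lemma~\ref{L:laplacian} for the second identity. Your write-up is in fact slightly more careful than the paper's, since you explicitly verify that rotational stabilisers are trivial and that distinct copy-assignments give distinct lifts.
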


\begin{proof}
First note that by Lemma~\ref{L:dcc}, $|\mathcal{C}_G(\alpha, \Vert\alpha\Vert)|$ is the number of direction-consistent circuits whose abelianization is $\alpha$. 

By Definition~\ref{D:lift}(ii) and Lemma~\ref{L:laplacian}, $\widetilde{G(\alpha)}_{\widetilde{\mfrako(\alpha)}}$ must be an Eulerian digraph, and there is a natural correspondence between Eulerian circuits on $\widetilde{G(\alpha)}_{\widetilde{\mfrako(\alpha)}}$  and direction-consistent circuits in $\mathcal{C}_G(\alpha, \Vert\alpha\Vert)$. In particular, every Eulerian circuit on $\widetilde{G(\alpha)}_{\widetilde{\mfrako(\alpha)}}$ corresponds to a direction-consistent circuit in $\mathcal{C}_G(\alpha, \Vert\alpha\Vert)$. Under this correspondence, one can straightforwardly verify that for each $C\in \mathcal{C}_G(\alpha, \Vert\alpha\Vert)$, there are exactly  $\prod_{\ve\in\vE_{\mfrako(\alpha)}(G)}\left(\deg_\alpha(\ve)!\right)$ Eulerian circuits on $\widetilde{G(\alpha)}_{\widetilde{\mfrako(\alpha)}}$  corresponding to the direction-consistent circuit$C$ on $G$. Note also that the ratio of the number of Eulerian circuits and the number of Eulerian cycle is exactly the number of edges. Hence the first identity in the statement follows. The second identity follows from applying the formula of the BEST theorem to $\widetilde{G(\alpha)}_{\widetilde{\mfrako(\alpha)}}$ and identifying $L(\widetilde{G(\alpha)}_{\widetilde{\mfrako(\alpha)}})$ with $L_\alpha(G)$ (Lemma~\ref{L:laplacian}). 
\end{proof}

\begin{remark}
If $\alpha\in H_1(G,\mbbZ)$ is connected but not universal on $G$, we just need to apply the above generalized BEST theorem to $\supp(\alpha)$ to compute the number of direction-consistent circuits with prescribed abelianization $\alpha$. 
\end{remark}

\begin{example}
In Example~\ref{E:K4-detect}, we investigate a $4$-complete graph $G$, and show how to detect a direction-consistent circuit whose abelianization is $\alpha=\ve_{12}+\ve_{14}+2\ve_{24}+2\ve_{31}+\ve_{32}+3\ve_{43}$. Now let us count the number of all such direction-consistent circuits. Figure~\ref{F:K4}(b) shows the lift $\widetilde{G(\alpha)}$ of $G$ and the lift $\widetilde{\mfrako(\alpha)}$ of $\mfrako(\alpha)$. Then by computation, $\Vert\alpha\Vert=10$, $\ec\left(\widetilde{G(\alpha)},\widetilde{\mfrako(\alpha)}\right)=48$ and $\prod_{\ve\in\vE_{\mfrako(\alpha)}(G)}\left(\deg_\alpha(\ve)!\right)=24$. Therefore, by Theorem~\ref{T:BEST}, we have $|\mathcal{C}_G(\alpha, \Vert\alpha\Vert)|=20$. 
\end{example}

\section{Solving the Homological Shortest Circuit Detection Problem} \label{S:HSCDP}
Algorithm~1 solves the HSCDP (Problem~\ref{Pr:HCDP}(iii)(iv)) when the targeted circulation $\alpha\in H_1(G,\mbbZ)$ is connected. 

To solve the HSCDP in general, we need to consider the case when $\supp(\alpha)$ have several connected components.  Recall that we have the following notion of subgraph contraction. 

\begin{definition}
For a graph $G$ and a subgraph $H$ of $G$, the \emph{subgraph contraction}  of $G$ with respect to $H$, denoted by $G|_H$, is a graph derived by contracting $G$ on all edges in $E(H)$. There is a natural map $\gamma_H$ from $G$  to  $G|_H$, which sends each connected component of $H$ to a single vertex of $G|_H$, while leaving the remaining vertices and edges untouched. 
\end{definition}

\begin{example}
\begin{figure}
 \centering
\begin{tikzpicture}[>=to,x=.8cm,y=.8cm]
    \tikzset{
        midarrow/.style={
            postaction={decorate},
            decoration={
                markings,
                mark=at position 0.5 with {\arrow{>}}
            }
        }
    }

    \begin{scope}[shift={(0,0)}] 
    \draw (-1, 1) node {(a)};

    \coordinate (V1) at (5, 0);
    \coordinate (V2) at (5, {-sqrt(3)-2});
    \coordinate (V3) at (5, {-sqrt(3)-4});
    \coordinate (V4) at (2, {-sqrt(3)-4});
    \coordinate (V5) at (0, {-sqrt(3)-2});
    \coordinate (V6) at (2, {-sqrt(3)-2});
    \coordinate (V7) at (0, {-sqrt(3)});
    \coordinate (V8) at (2, {-sqrt(3)});
    \coordinate (V9) at (1, 0);

    \path[-,font=\scriptsize, line width=1.5pt, black]
        (V1) edge (V2)
        (V2) edge[midarrow, out=-35, in=35, blue]  node[midway, right, font=\large, blue] {2} (V3)
        (V3) edge[midarrow, out=145, in=-145, blue]  node[midway, left, font=\large, blue] {2} (V2)
        (V3) edge (V4)
        (V2) edge (V6)
        (V6) edge (V1)
        (V5) edge[midarrow, blue] node[midway, above, font=\large, blue] {1}  (V6)
        (V6) edge[midarrow, blue] node[midway, right, font=\large, blue] {1}  (V4)
        (V4) edge[midarrow, blue, out=180, in=-90]  node[midway, left=5pt, font=\large, blue] {2} (V5)
        (V5) edge[midarrow, blue, out=-15, in=105]  node[midway, right, font=\large, blue] {1} (V4)
        (V5) edge (V7)
        (V6) edge (V8)
        (V8) edge[midarrow, blue] node[midway, below, font=\large, blue] {1}  (V7)
        (V7) edge[midarrow, blue] node[midway, above, left, font=\large, blue] {1}  (V9)
        (V9) edge[midarrow, blue] node[midway, above, right, font=\large, blue] {1}  (V8)
        (V9) edge (V1);
        
    \node[above] at (V1) {$v_1$};
    \node[right=10pt, blue] at (V2) {$v_2$};
    \node[below, blue] at (V3) {$v_3$};
    \node[below, blue] at (V4) {$v_4$};
    \node[below left, blue] at (V5) {$v_5$};
    \node[above left, blue] at (V6) {$v_6$};
    \node[above left, blue] at (V7) {$v_7$};
    \node[right, blue] at (V8) {$v_8$};
    \node[above, blue] at (V9) {$v_9$};
    \fill[black] (V1) circle (2.5pt);
    \fill[blue] (V2) circle (2.5pt);
    \fill[blue] (V3) circle (2.5pt);
    \fill[blue] (V4) circle (2.5pt);
    \fill[blue] (V5) circle (2.5pt);
    \fill[blue] (V6) circle (2.5pt);
    \fill[blue] (V7) circle (2.5pt);
    \fill[blue] (V8) circle (2.5pt);
    \fill[blue] (V9) circle (2.5pt);
    \end{scope}

    \begin{scope}[shift={(9,0)}] 
    \draw (-1, 1) node {(b)};
    \coordinate (V1) at (4, 0);
    \coordinate (W2) at (0, -4);
    \coordinate (W1) at (4, -4);
    \coordinate (W3) at (0, 0);

    \path[-,font=\scriptsize, line width=1.5pt, black]
        (W3) edge (V1)
        (V1) edge (W1)
        (V1) edge (W2)
        (W1) edge[out=155, in=25] (W2)
        (W2) edge[out=-25, in=-155] (W1)
        (W2) edge[out=65, in=-65] (W3)
        (W3) edge[out=-115, in=115] (W2);

    \node[below, right, blue] at (W1) {$w_1$};
    \node[below, left, blue] at (W2) {$w_2$};
    \node[above, blue] at (W3) {$w_3$};
    \node[above] at (V1) {$v_1$};
    \fill[blue] (W1) circle (2.5pt);
    \fill[blue] (W2) circle (2.5pt);
    \fill[blue] (W3) circle (2.5pt);
    \fill[black] (V1) circle (2.5pt);
    \end{scope}
    
\end{tikzpicture}
\caption{(a) A graph $G$ with a subgraph $H$ (in blue);  (b) The subgraph contraction $G|_H$ of $G$ with respect to $H$.}
 \label{F:Contraction}
\end{figure}
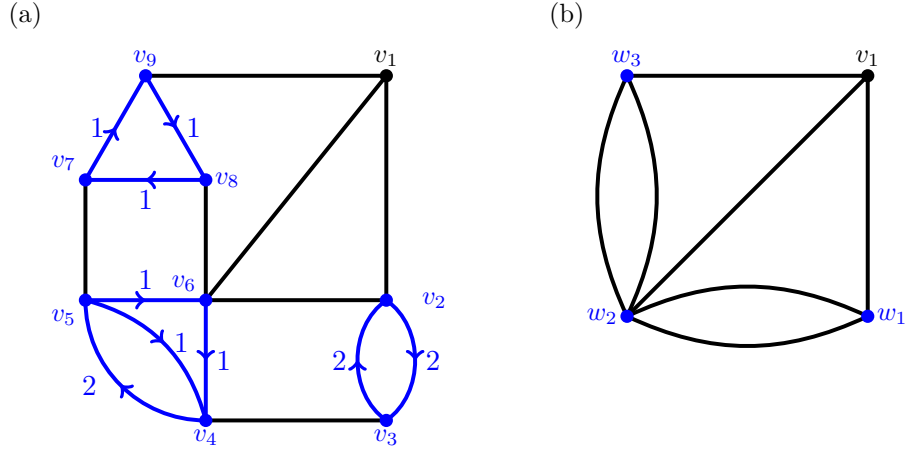
Figure~\ref{F:Contraction} shows an example of subgraph contraction. Figure~\ref{F:Contraction}(a) shows a graph $G$ with vertex set $\{v_1,\cdots, v_9\}$. The blue vertices and edges form a subgraph $H$ with three connected components $H_1$, $H_2$ and $H_3$. In particular, $V(H_1)=\{v_2,v_3\}$, $V(H_2)=\{v_4,v_5,v_6\}$ and $V(H_3)=\{v_7,v_8,v_9\}$. Contracting $G$ with respect to $H$, one obtain $G|_H$ as shown in Figure\ref{F:Contraction}(b). Note that $\gamma_H(H_1)=w_1$,  $\gamma_H(H_2)=w_2$ and  $\gamma_H(H_3)=w_3$.
\end{example}

Let us recall the Steiner Tree Problem in graphs \cite{Ljubic21solving,DW1971steiner}  stated as follows.

\begin{problem}[Steiner Tree Problem in graphs]
Consider a graph $G$ with non-negative edge weights and a subset $V'$ of $V(G)$. A \emph{Steiner Tree} is a subtree $T$ of $G$ such that  $V'\subset V(T)$. The task of the \emph{Steiner Tree Problem} is to find a Steiner tree of minimum weight. Such an optimization problem is NP-hard problem, while some special cases can be solved efficiently. 
\end{problem}

Now we can present our solution to HSCDP as in Algorithm~2. 
\begin{algorithm}
\label{A:HSCDP}
\caption{A solution to HSCDP (Problem~\ref{Pr:HCDP}(iv))}
\begin{algorithmic}[1]
\State \textbf{Input:} A nonzero circulation  $\alpha\in H_1(G,\mbbZ)$   on a graph $G$, and a metric $\mu$ on $G$
\State \textbf{Output:} A circuit in $\mcalC(\alpha)$ of minimum $\mu$-length
\State \textbf{Procedure:}
\If{$\alpha$ is connected}
\State Run Algorithm~1 for $\alpha$ to derive a circuit $C\in\mcalC_G(\alpha,\Vert\alpha\Vert)$
\Else
\State $H\gets \supp(\alpha)$
\State Obtain all the connected components $H_1,\cdots,H_s$ of $H$
\For{$i=1,\cdots,s$}
\State $\alpha_i\gets\alpha|_{H_i}$ \Comment{$\alpha|_{H_i}$ is the restriction of $\alpha$ to $H_i$}
\State Run Algorithm~1 for $\alpha_i$ to derive a circuit $C_i$
\EndFor
\State Contract $G$ with respect to $H$ and obtain $G|_H$
\State Obtain $W = \{\gamma_H(H_1),\cdots,\gamma_H(H_s)\}\subseteq V(G|_H)$ 
\State Solve the Steiner Tree Problem on $G|_H$: find a Steiner tree $T_\alpha$ of minimum $\mu$-length $\mu(T_\alpha)=\sum_{e\in E(T)}\mu(e)$ with respect to $W$ 
\State Obtain a closed path $P$ on $T_\alpha$ with initial vertex $w_1$ such that $P$ goes through each  edge of $T_\alpha$ exactly twice 
\State Partition $P$ into paths $P_1,\cdots,P_t$ such that the initial and terminal vertices of each $P_i$ are in W but no other interior vertices in $W$
\State Identify $P_1,\cdots,P_t$ as paths on $G$
\For{$i=1,\cdots,s$}
\State Obtain $V_i:=V(H_i)\bigcap\left( \bigcup_{i=1}^{s}V(P_i)\right)$ 
\State Translate $C_i$ if necessary to make $C_i$ start at a vertex in $V_i$
\State Partition $C_i$ into paths $P_{i,1},\cdots,P_{i,|V_i|}$ such that $C_i=\concat(P_{i,1},\cdots,P_{i,|V_i|})$ and the initial and terminal vertices of each $P_{i,j}$ are in $V_i$
\EndFor
\State Ordering $P_1,\cdots,P_t,P_{1,1},\cdots,P_{1,|V_1|},\cdots,P_{s,1},\cdots,P_{s,|V_s|}$ properly, concatenate these paths into a circuit $C$
\EndIf 
\State\Return C
 \end{algorithmic}
 \end{algorithm}
 
 \begin{theorem} \label{T:HSCDP}
 Algorithm~2 solves HSCDP. In particular, $\tau_\mu(C)=2\mu(T_\alpha)+\Vert\alpha\Vert_\mu$ for the derived circuit $C$ in Algorithm~2. 
 \end{theorem}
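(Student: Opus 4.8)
The plan is to verify two things about the circuit $C$ returned by Algorithm~2: first that $C$ is a well-defined circuit in $\mcalC_G(\alpha)$, and second that its $\mu$-length $\tau_\mu(C)=2\mu(T_\alpha)+\Vert\alpha\Vert_\mu$ is indeed the minimum over all circuits in $\mcalC_G(\alpha)$. The connected case is already settled by Theorem~\ref{T:DCC} and Lemma~\ref{L:dcc}, where the returned circuit has $\tau_\mu(C)=\Vert\alpha\Vert_\mu$ (and $T_\alpha$ is empty, so $\mu(T_\alpha)=0$). So the substance lies in the disconnected case.

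First I would establish that the concatenation in Step~24 actually produces a valid circuit whose abelianization is $\alpha$. The key combinatorial bookkeeping is that the doubled Steiner tree walk $P$ visits each contracted vertex $\gamma_H(H_i)=w_i$ and returns to it, and each visit corresponds to entering and leaving a connected component $H_i$ at vertices of $V_i$; meanwhile $C_i$ (from Algorithm~1, so $C_i^{\ab}=\alpha_i=\alpha|_{H_i}$) gets split at exactly these vertices into subpaths $P_{i,1},\dots,P_{i,|V_i|}$. The proper ordering in Step~24 then interleaves the tree-traversal pieces $P_1,\dots,P_t$ with the component pieces so that terminal and initial vertices match up, giving a single closed walk with no backtrack. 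Since $\sum_i C_i^{\ab}=\sum_i \alpha_i = \alpha$ and each tree edge is traversed once in each direction (so its net abelianized contribution cancels), the resulting $C$ satisfies $C^{\ab}=\alpha$, hence $C\in\mcalC_G(\alpha)$. The length identity $\tau_\mu(C)=2\mu(T_\alpha)+\Vert\alpha\Vert_\mu$ then follows by summing: the $P_i$ pieces traverse each tree edge twice contributing $2\mu(T_\alpha)$, and the $C_i$ pieces contribute $\sum_i\tau_\mu(C_i)=\sum_i\Vert\alpha_i\Vert_\mu=\Vert\alpha\Vert_\mu$ by the direction-consistency of each $C_i$.

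The harder direction is optimality: I must show no circuit in $\mcalC_G(\alpha)$ has smaller $\mu$-length. The approach is a lower-bound argument. Any circuit $C'$ with $C'^{\ab}=\alpha$ must traverse every edge of $\supp(\alpha)$ with the prescribed multiplicities, contributing at least $\Vert\alpha\Vert_\mu$ by Lemma~\ref{L:ineq}. Because $C'$ is a single closed connected walk but $\supp(\alpha)$ is disconnected into $H_1,\dots,H_s$, the walk must also use edges outside $\supp(\alpha)$ to travel between components; the crux is that the subgraph of $G$ formed by these ``connector'' edges, after contracting each $H_i$, must connect all the vertices $w_1,\dots,w_s$ in $G|_H$, and moreover each connector edge must be traversed an even number of times (hence at least twice) because entering a region and leaving it through the connectors is forced by closedness and the balancing condition. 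This even-multiplicity-plus-connectivity constraint means the connector edges contain a connected subgraph spanning $W$, i.e. a Steiner tree for $W$, so their total contribution is at least $2\mu(T_\alpha)$ by minimality of $T_\alpha$.

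The main obstacle I anticipate is making the parity/connectivity argument for the connector edges fully rigorous — specifically, showing that the edges traversed outside $\supp(\alpha)$, counted with multiplicity, contain a connected spanning structure over $W$ in $G|_H$ and that each is used at least twice. The natural tool is the balancing condition from Definition~\ref{D:homology} applied in $G|_H$: projecting $C'$ under $\gamma_H$ gives a closed walk in $G|_H$ whose abelianization restricted to connector edges must itself be a circulation modulo~$2$ (since $\alpha$ contributes nothing on connectors), forcing even edge-multiplicities; and the walk's being closed and visiting every $w_i$ forces connectivity over $W$. Once these two facts are in hand, the Steiner-tree minimality gives the bound $2\mu(T_\alpha)$, and combined with $\Vert\alpha\Vert_\mu$ this matches $\tau_\mu(C)$ exactly, proving optimality.
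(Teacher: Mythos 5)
Your proposal follows the paper's own proof almost step for step: the same verification that the interleaving at Step~24 produces a closed walk with $C^{\ab}=\alpha$ and $\tau_\mu(C)=2\mu(T_\alpha)+\Vert\alpha\Vert_\mu$, and the same optimality argument that decomposes an arbitrary $C'\in\mcalC_G(\alpha)$ into the maximal subpaths lying inside the components $H_i$ (contributing at least $\Vert\alpha_i\Vert_\mu$, via Lemma~\ref{L:ineq}) and the remaining ``connector'' subpaths (contributing at least $2\mu(T_\alpha)$). The paper, in fact, only asserts this second lower bound; you attempt to justify it, and that is exactly where your one genuine flaw sits.

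The tool you propose --- that the connector part of $C'$ is ``a circulation modulo $2$'' by the balancing condition, and that this forces even edge multiplicities --- is incorrect, and incorrect in a way that matters for the bound. Balance (even exact balance, let alone balance mod $2$) does not force even multiplicities: a directed cycle traversed once is balanced at every vertex yet uses each of its edges exactly once. And if the connector part could be such a cycle, the claimed bound would genuinely fail: if three terminals $w_1,w_2,w_3$ form a unit triangle in $G|_H$, a single tour around the triangle has length $3$, while $2\mu(T_\alpha)=4$. What rescues the argument is a stronger and simpler fact, which your parenthetical ``since $\alpha$ contributes nothing on connectors'' gestures at but does not actually invoke: because ${C'}^{\ab}=\alpha$ and no connector edge lies in $\supp(\alpha)$, the abelianization of the connector part is exactly zero in $C_1(G,\mbbZ)$, not merely balanced. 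Exact vanishing means that on each connector edge the number of forward traversals equals the number of backward traversals, so every connector edge that is used at all is used an even number of times, hence at least twice. Combined with your (correct) connectivity observation --- $\gamma_H(C')$ is a closed walk in $G|_H$ visiting every $w_i$, so the used connector edges contain a subtree of $G|_H$ spanning $W$, whose weight is at least $\mu(T_\alpha)$ by minimality --- this gives the required bound $\sum_i\tau_\mu(P'_i)\geq 2\mu(T_\alpha)$. With that one substitution your proof is complete and coincides with the paper's.
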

 \begin{proof}
First we claim that $V_i$ at Step~24 of Algorithm~2, by some proper ordering,  we can always concatenate $P_1,\cdots,P_t,P_{1,1},\cdots,P_{1,|V_1|},\cdots,P_{s,1},\cdots,P_{s,|V_s|}$ into a closed path $C$. Note that at Step~20, the derived $V_i$ can be the set of contact vertices between $H_i$ and $T$. In a concatenation for $C$, when $C$ travels along some $P_{i,j}$ and hits  $v=P_{i,j}(1)=P_{i,j'}(0)$, $C$ will not keep traveling along $P_{i,j'}$ as $C_i$. Instead, $C$ will exit $H_i$ and travel along the unique $P_k$ with 
$P_k(0)=v$. At a later stage, $C$ will travel back along the unique $P_{k'}$ with  $P_{k'}(1)=v$ and hit $v$, and then $C$ will travel along  $P_{i,j'}$. In this sense, $C$ will always be able to get back to its initial vertex, and at that stage, $C$ has traveled along all $P_1,\cdots,P_t,P_{1,1},\cdots,P_{1,|V_1|},\cdots,P_{s,1},\cdots,P_{s,|V_s|}$, each path for exactly one time. Therefore, we will finally get a closed path $C$ with $\tau_\mu(C)=\sum_{i=1}^t \tau_\mu(P_i)+\sum_{i=1}^s\sum_{j=1}^{|V_i|}\tau_\mu(P_{i,j})=2\mu(T_\alpha)+\sum_{i=1}^s\tau_\mu(C_i)$. Since all $C_i$'s are direction-consistent circuits, we must have $\tau_\mu(C)=2\mu(T_\alpha)+\sum_{i=1}^s\Vert\alpha_i\Vert_\mu=2\mu(T)+\Vert\alpha\Vert_\mu$ . 

Now let us show that $C\in\mcalC_G(\alpha)$. Actually $$C^{\ab}=\sum_{i=1}^t P_i^{\ab}+\sum_{i=1}^s\sum_{j=1}^{|V_i|}P_{i,j}^{\ab}=\sum_{i=1}^t P_i^{\ab}+\sum_{i=1}^s C_i^{\ab}.$$ Since $C_i$'s are derived by Algorithm~1 for $\alpha_i$, we have $C_i^{\ab}=\alpha_i$. Moreover, we must have $\sum_{i=1}^t P_i^{\ab}=0$. Therefore, $C^{\ab}=\alpha$ as claimed. 

It remains to show that $C$ has the minimum $\mu$-length among all circuits in $\mcalC_G(\alpha)$. Let $C'\in \mcalC_G(\alpha)$. For $i=1,\cdots,s$, let $P'_{i,1},\cdots, P'_{i,t_i}$ be all the maximal paths which are subpaths of $C'$ and are entirely contained in $H_i$. Let $P'_1,\cdots P'_{t'}$ be all the remaining paths when removing all $P'_{i,1},\cdots, P'_{i,t_i}$ for $i=1,\cdots,s$. Then it is clear that (1) $\sum_{j=1}^{t_i}{P'_{i,j}}^{\ab}=\alpha_i$ for $i=1,\cdots,s$, and (2) $\sum_{i=1}^{t'} {P'_i}^{\ab}=0$. But (1) implies that $\sum_{j=1}^{t_i}\tau_\mu(P'_{i,j})\geq \Vert\alpha_i\Vert_\mu$ and (2) implies that $\sum_{i=1}^{t'} \tau_\mu(P'_i)\geq 2\mu(T_\alpha)$. 
 \end{proof}
 \begin{example}
Let us consider $G$ and $G|_H$ again as shown in  Figure~\ref{F:Contraction}. An orientation $\mfrako$ of $H$ is demonstrated by the arrows. 
We label the oriented edges in $\vE_\mfrako(H)$ from $v_i$ to $v_j$ as $\ve_{ij}$. Consider a circulation $\alpha = 2\ve_{23}+2\ve_{32}+2\ve_{45}+\ve_{54}+\ve_{56}+\ve_{64}+\ve_{79}+\ve_{98}+\ve_{87}\in H_1(G,\mbbZ)$. Clearly $\supp(\alpha)=H=H_1\sqcup H_2 \sqcup H_3$. Running Algorithm~2 for $\alpha$, if the tree $T_\alpha$ derived at Step~15 is given by $V(T_\alpha)=\{v_1,w_1,w_2,w_3\}$ an $E(T_\alpha)=\{e'_{11},e'_{12},e'_{13}\}$ where $e'_{1i}$ is the edge connecting $v_1$ and $w_i$ for $i=1,2,3$, then an output of Algorithm~2 can be $C=\ve_{23}\ve_{32}\ve_{23}\ve_{32}\ve_{21}\ve_{16}\ve_{64}\ve_{45}\ve_{54}\ve_{45}\ve_{56}\ve_{61}\ve_{19}\ve_{98}\ve_{87}\ve_{79}\ve_{91}\ve_{12}$. If the the tree $T$ derived at Step~15 is made of the edge in $G|_H$ between $w_1$ and $v_2$ corresponding to the edge $e_{34}\in E(G)$ and the edge in $G|_H$ between $w_2$ and $w_3$ corresponding to the edge $e_{57}\in E(G)$, then an output of Algorithm~2 can be $C=\ve_{32}\ve_{23}\ve_{32}\ve_{23}\ve_{34}\ve_{45}\ve_{54}\ve_{45}\ve_{57}\ve_{79}\ve_{98}\ve_{87}\ve_{75}\ve_{56}\ve_{64}\ve_{43}$.
 \end{example}

\section{An application to  the one-carrier Transportation Routing Problem of type (1,1)} \label{S:1cTRP}
The Transportation Routing Problem~\cite{TGF74transportation} is a specific type of optimization problem that focuses on determining the most efficient routes for vehicles to transport goods from a single or multiple sources to multiple destinations. The transportation routing problem is a key component of logistics and distribution planning, as it helps companies optimize their delivery operations, reduce transportation costs, and improve overall efficiency. Based on variants of assumptions of constraints, Transportation Routing Problem can be related to some classical optimization problems,  such as the Traveling Salesman Problem~\cite{HPR2013traveling,JM1997traveling} and the Vehicle Routing Problem~\cite{PGGM2013a, BRN2016the}.

 Here we propose a special case of the Transportation Routing Problem, namely 1c-TRP of type (1,1). 

\begin{problem}[\textbf{One-carrier Transportation Routing Problem}] \label{Pr:1c-TRP}
Let $G$ be a connected graph with each edge  assigned a non-negative length. A set of items needs to be transported from their \emph{source vertices} to their \emph{destination vertices}. More precisely, each item is initially located at its source vertex and must be transported along the edges of $G$ to its destination vertex. There is exactly one carrier with a specified \emph{capacity} for the whole transportation task. The carrier starts at a specified \emph{starting vertex}, follows a planned route to pick up items at their source vertices, delivers them to their designated destination vertices, and finally returns to the starting vertex. During the whole transportation process, the number of items on the carrier \emph{can not exceed its capacity}. The objective is to \emph{minimize the total traveling distance} in order to optimize delivery costs. This is called the \emph{one-carrier Transportation Routing Problem}, or \emph{1c-TRP}. If further assuming that the source and destination vertices for each item are always adjacent, and the capacity for the carrier is $1$, then we call such a special case \emph{1c-TRP of type (1,1)}. 
\end{problem}

Our solution to this one-carrier Transportation Routing Problem of type (1,1) is based on relating it to the HSCDP.

Here let us first define some partial orderings. 
\begin{definition} \label{D:parital}
We endow the space of partial orientations of $G$ and $C_1(G,\mbbZ)$ with partial orderings as follows:
\begin{enumerate}[(i)]
\item For two partial orientations $\mfrako$ and $\mfrako'$ of $G$, we say $\mfrako\leq \mfrako'$ if $\supp(\mfrako)$ is a subgraph of $\supp(\mfrako')$, and $\mfrako'$ coincides with $\mfrako$  when restricted to $\supp(\mfrako)$. 
\item For $\alpha,\beta\in C_1(G,\mbbZ)$ and a partial orientation $\mfrako$, we say  $\alpha\leq_\mfrako \beta$ if  $\supp(\beta-\alpha)$ is a subgraph of $\supp(\mfrako)$ and $\beta-\alpha$ can be expressed as $\beta-\alpha=\sum_{\ve\in \vE_\mfrako(\supp(\mfrako))}c_\ve\cdot \ve$ with $c_\ve\geq 0$ for all $\ve\in \vE_\mfrako(\supp(\mfrako))$.  
\item For $\alpha,\beta\in C_1(G,\mbbZ)$, we say $\alpha\leq \beta$, if $\mfrako(\alpha)\leq \mfrako(\beta)$ and $\alpha\leq_{\mfrako(\beta)} \beta$. 
\end{enumerate}
\end{definition}

It is straightforward to verify that the above partial orderings are well-defined. For $\alpha\in C_1(G,\mbbZ)$, we let $\Lambda_G(\alpha):=\{\beta\in C_1(G,\mbbZ)\mid \alpha\leq \beta\}$. 

We can restate Problem~\ref{Pr:1c-TRP} based on the terminologies introduced above as follows:
\begin{problem}[A restatement of 1c-TRP of type (1,1)] \label{Pr:1c-TRP-new} 
Let $G$ be a simple graph with vertex set $V(G)=\{v_1,\cdots,v_n\}$.   Let $Q=(q_{ij})_{n\times n}$ be a \emph{task matrix}, i.e., for all $1\leq i,j\leq n$, the entry $q_{ij}\geq 0$ is the number of items that are initially located at $v_i$ and designated to be transported to $v_j$. Assume that $q_{ij}=0$ if $v_i$ and $v_j$ are not adjacent. Let $\mu$ be a metric on $G$. For a path $P$ and any two adjacent vertices $v_i$ and $v_j$, denote by $\theta_{ij}(P)$ the number of times that the path $P$ goes through the oriented edge from $v_i$ to $v_j$. The task is to find a closed walk $C$ starting from a predetermined vertex $v$, shortest with respect to $\mu$, such that $\theta_{ij}(C)\geq q_{ij}$ for all adjacent $v_i$ and $v_j$. 
\end{problem}

Note that the desired closed walk $C$ in Problem~\ref{Pr:1c-TRP-new} is not necessarily a circuit (backtracks and tail are allowed), and it might happen that $q_{ij}$ and $q_{ji}$ are both positive. To apply the results presented in Section~\ref{S:DCC}~and~\ref{S:HSCDP}, we will play the following ``edge doubling trick''. 

\begin{definition} \label{D:doubling}
Let $G$  be a simple graph with vertex set $V(G)=\{v_1,\cdots,v_n\}$ and edge set $E(G)=\{e_1,\cdots,e_m\}$.
\begin{enumerate}[(i)]
\item  The \emph{doubling} of $G$, denoted by $\hat{G}$, is a graph with the vertex set $V(\hat{G})=\{\hat{v}_1,\cdots,\hat{v}_n\}$, and a doubled edge set $E(\hat{G})=\{\hat{e}_{1,1},\hat{e}_{1,2},\hat{e}_{2,1},\hat{e}_{2,2},\cdots,\hat{e}_{m,1}, e_{m,2}\}$ where $\hat{e}_{k,1}$ and $\hat{e}_{k,2}$ are two duplicated edges of $e_k$, i.e., 
if the adjacent vertices of $e_k$ are $v_i$ and $v_j$, then the  the adjacent vertices of both $\hat{e}_{k,1}$ and $\hat{e}_{k,2}$ are $\hat{v}_i$ and $\hat{v}_j$. Any metric $\mu$ on $G$ induces naturally a metric $\hat{\mu}$ on $\hat{G}$ as $\hat{\mu}(\hat{e}_{k,1})=\hat{\mu}(\hat{e}_{k,2})=\mu(e_k)$. 
\item There is a natural \emph{projection map} $\pi$ from $\hat{G}$ to $G$, sending $\hat{v}_i$ to $v_i$ for all $v_i\in V(G)$, and sending $\hat{e}_{i,1}$ and $\hat{e}_{i,2}$ to $e_i$ for all $e_i\in E(G)$. Such a covering map induces maps, also denoted by $\pi$, from subgraphs of $\hat{G}$ to subgraphs of $G$, and from walks on $\hat{G}$ to walks on $G$. (Note that if $\hat{C}$ is a circuit on $\hat{G}$, then $\pi(\hat{C})$ is a closed walk but not necessarily a circuit on $G$.)
\item Let $\hat{\mfrako}$ be an orientation on $\hat{G}$ such that the edge pairs $\hat{e}_{k,1}$ and $\hat{e}_{k,2}$ are always oppositely oriented, i.e., if $\hat{\ve}_{\hat{\mfrako}, k,1}$ orients $v_i$ to $v_j$, then $\hat{\ve}_{\hat{\mfrako}, k,2}$ must orient from $v_j$ to $v_i$. A \emph{lift} of a task matrix $Q=(q_{ij})_{n\times n}$ of $G$ is an element $\alpha\in C_1(\hat{G},\mbbZ)$ such that $\alpha=\sum_{k=1}^m (c_{k,1}\cdot \hat{\ve}_{\hat{\mfrako}, k,1}+c_{k,2}\cdot\hat{\ve}_{\hat{\mfrako}, k,2})$ where $c_{k,1}=q_{ij}$ and  $c_{k,2}=q_{ji}$, knowing that $\hat{\ve}_{\hat{\mfrako}, k,1}$ orients from $v_i$ to $v_j$ and $\hat{\ve}_{\hat{\mfrako}, k,2}$ orients from $v_j$ to $v_i$. 
\end{enumerate}
\end{definition}

We now present our partial solution to 1c-TRP of type (1,1) as stated in  Algorithm~3 and Theorem~\ref{T:1c-TRP}. In particular, the result is built on the Shortest Vector Problem (SVP)~\cite{MG2002shortest, P2009public} on a subset of  the lattice $H_1(\hat{G},\mbbZ)$ using the $L^1$ norm $\Vert\cdot\Vert_\mu$. Note that the SVP in this setting can be solved with a ``fairly good basis'' of the lattice $H_1(\hat{G},\mbbZ)$: (1) Choose a spanning tree $\hat{T}$ of $\hat{G}$ and let $\{\hat{\ve}_1,\cdots,\hat{\ve}_{\hat{g}}\}=\vE_{\hat{\mfrako}}(\hat{G})\setminus E_{\hat{\mfrako}}(\hat{T})$ where $\hat{g}$ is the genus (or the first Betti number) of $\hat{G}$; (2) Let $\hat{C}_i$ be the unique circuit on $\hat{G}$ which is the concatenation of $\hat{\ve}_1$ and the unique path from $\hat{\ve}_1(1)$ to $\hat{\ve}_1(0)$ on $\hat{T}$ for all $i=1,\cdots,\hat{g}$; (3) $\{\hat{C}_1^{\ab},\cdots, \hat{C}_{\hat{g}}^{\ab}\}$ is a desired basis.

\begin{algorithm}
\label{A:1cTRP}
\caption{A partial solution to 1c-TRP of type (1,1) (Problem~\ref{Pr:1c-TRP-new})}
\begin{algorithmic}[1]
\State \textbf{Input:}  A simple graph $G$ with vertex set $V(G)=\{v_1,\cdots,v_n\}$ and edge set $E(G)=\{e_1,\cdots,e_m\}$, a metric $\mu$ of $G$,  a task matrix $Q=(q_{ij})_{n\times n}$ of $G$,  and $v\in V(G)$
\State \textbf{Output:} A closed walk on $G$ starting from vertex $v$
\State \textbf{Procedure:}
\State Obtain the doubling  graph $\hat{G}$ of $G$, the covering map $\pi$, a vertex $\hat{v}=\pi^{-1}(v)$, a metric $\hat{\mu}$ on $\hat{G}$, an orientation $\hat{\mfrako}$ of  $\hat{G}$, and a lift $\alpha\in C_1(\hat{G},\mbbZ)$ of $Q$ as in Definition~\ref{D:doubling}
\If{$\hat{v}\in \supp(\alpha)$}
\State Solve a Shortest Vector Problem: find a lattice point $\beta$ in $\Lambda_{\hat{G}}(\alpha)\bigcap H_1(\hat{G},\mbbZ)$ which minimizes $\Vert\beta\Vert_\mu$ 
\Else
\State Obtain $\Out_{\hat{\mfrako}}(\hat{v})=\{\hat{\ve}_1,\cdots, \hat{\ve}_s\}$
\For{$i=1,\cdots,s$}
\State $\alpha_i\gets \alpha+\hat{\ve}_i$
\State Solve a Shortest Vector Problem: find a lattice point $\beta_i$ in $\Lambda_{\hat{G}}(\alpha_i)\bigcap H_1(\hat{G},\mbbZ)$ which minimizes $\Vert\beta_i\Vert_\mu$
\EndFor
\State Choose $\beta\in\{\beta_1,\cdots,\beta_s\}$ such that $\Vert\beta\Vert_\mu = \min_{i=1}^s \Vert\beta_i\Vert_\mu$
\EndIf
\State Run Algorithm~2 for $\beta$ to derive a circuit $\hat{C}$ on $\hat{G}$
\State Translate $\hat{C}$ if necessary to make $\hat{C}$ start at $\hat{v}$ 
\State $C\gets \pi(\hat{C})$
\State \Return $C$ 
 \end{algorithmic}
 \end{algorithm}
 
 \begin{theorem} \label{T:1c-TRP}
If $\beta$ derived at Step~6 or Step~13  of Algorithm~3 is connected, then Algorithm~3 solves Problem~\ref{Pr:1c-TRP-new}: 
 \end{theorem}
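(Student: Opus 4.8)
The plan is to show two things: that the closed walk $C=\pi(\hat{C})$ returned by Algorithm~3 is \emph{feasible} (it starts at $v$ and satisfies $\theta_{ij}(C)\geq q_{ij}$ for all adjacent $v_i,v_j$), and that it is \emph{optimal} (no feasible closed walk starting at $v$ has smaller $\mu$-length). The key dictionary is the correspondence between closed walks on $G$ and circuits on the doubling $\hat G$: by Definition~\ref{D:doubling}, a circuit $\hat C$ on $\hat G$ projects under $\pi$ to a closed walk on $G$, and the oppositely-oriented edge pairs $\hat e_{k,1},\hat e_{k,2}$ let us encode ``traversing $e_k$ from $v_i$ to $v_j$'' as picking up a copy of one lifted oriented edge. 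The task-matrix lift $\alpha$ records the \emph{required} traversals, and the condition $\theta_{ij}(C)\ge q_{ij}$ translates, under $\pi$, into $\hat C^{\ab}\geq \alpha$ in the partial ordering of Definition~\ref{D:parital}, i.e.\ $\hat C^{\ab}\in\Lambda_{\hat G}(\alpha)$.

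First I would establish feasibility. Since $\beta\in\Lambda_{\hat G}(\alpha)\cap H_1(\hat G,\mbbZ)$ is a connected circulation, Algorithm~2 (justified by Theorem~\ref{T:HSCDP}) returns a circuit $\hat C\in\mcalC_{\hat G}(\beta)$, so $\hat C^{\ab}=\beta\geq\alpha$. Projecting, $\theta_{ij}(C)$ counts exactly the coefficient of the appropriate lifted oriented edge in $\hat C^{\ab}$, which is $\geq$ the coefficient in $\alpha$, namely $q_{ij}$; hence $C$ is feasible. The starting-vertex requirement is handled by the translation at Step~16: when $\hat v\in\supp(\alpha)\subseteq\supp(\beta)$ the circuit $\hat C$ already visits $\hat v$ and can be translated to begin there, and when $\hat v\notin\supp(\alpha)$ the $\else$ branch forces $\hat v\in\supp(\alpha_i)\subseteq\supp(\beta_i)$ by adjoining an out-edge $\hat\ve_i$ at $\hat v$, again guaranteeing $\hat C$ passes through $\hat v$. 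Thus $C$ is a valid closed walk starting at $v$ in every case.

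Next I would prove optimality. Given any feasible closed walk $C'$ on $G$ starting at $v$, I would lift it to a circuit $\hat C'$ on $\hat G$: each traversal of $e_k$ from $v_i$ to $v_j$ is assigned to the lifted oriented edge oriented that way, so that $\hat C'$ is a closed walk on $\hat G$ with $\pi(\hat C')=C'$ and $\tau_{\hat\mu}(\hat C')=\tau_\mu(C')$. Feasibility $\theta_{ij}(C')\ge q_{ij}$ gives ${\hat C'}^{\ab}\geq\alpha$, so ${\hat C'}^{\ab}\in\Lambda_{\hat G}(\alpha)\cap H_1(\hat G,\mbbZ)$ is one of the lattice points over which the Shortest Vector Problem at Step~6 (or Step~13, after matching the branch via the out-edge at $\hat v$) minimizes. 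Hence $\Vert\beta\Vert_{\hat\mu}\leq\Vert{\hat C'}^{\ab}\Vert_{\hat\mu}$. Combining with Lemma~\ref{L:ineq}, $\tau_\mu(C')=\tau_{\hat\mu}(\hat C')\geq\Vert{\hat C'}^{\ab}\Vert_{\hat\mu}\geq\Vert\beta\Vert_{\hat\mu}$. It then remains to match this lower bound: because $\beta$ is connected, the circuit $\hat C$ returned by Algorithm~1/2 is direction-consistent with $\tau_{\hat\mu}(\hat C)=\Vert\beta\Vert_{\hat\mu}$ (Lemma~\ref{L:dcc}), so $\tau_\mu(C)=\Vert\beta\Vert_{\hat\mu}\leq\tau_\mu(C')$, proving optimality.

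The main obstacle I anticipate is the optimality argument in the $\else$ branch, where $\hat v\notin\supp(\alpha)$. There the carrier's route must still pass through $v$ even though no required task touches $v$, so the minimizer cannot be taken literally over $\Lambda_{\hat G}(\alpha)$; instead one must range over the $s$ shifted lattices $\Lambda_{\hat G}(\alpha_i)$, one per out-edge $\hat\ve_i$ of $\hat v$. The delicate point is to argue that the lift $\hat C'$ of any \emph{optimal} feasible $C'$, after being made to start at $\hat v$, uses some first out-edge $\hat\ve_i$ and therefore lies in $\Lambda_{\hat G}(\alpha_i)$ for that $i$, so taking the minimum over $i$ in Step~13 genuinely dominates $\tau_\mu(C')$. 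Making this branch-selection rigorous—and confirming that the cost of the extra edge $\hat\ve_i$ is correctly accounted for so the bound $\tau_\mu(C)\le\tau_\mu(C')$ still holds—is where the proof needs the most care. (One should also note the explicit hypothesis that $\beta$ be connected: this is exactly what lets Theorem~\ref{T:HSCDP}/Algorithm~1 realize the length $\Vert\beta\Vert_{\hat\mu}$, and it is flagged as the standing assumption of the theorem.)
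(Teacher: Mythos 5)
Your proposal is correct and follows essentially the same route as the paper's own proof: lift closed walks on $G$ to direction-consistent circuits on the doubling $\hat{G}$, get the lower bound from Lemma~\ref{L:ineq} together with the SVP-minimality of $\beta$, and match it because connectedness of $\beta$ makes Algorithm~2 reduce to Algorithm~1, whose output satisfies $\tau_{\hat\mu}(\hat{C})=\Vert\beta\Vert_{\hat\mu}$. The else-branch subtlety you flag is resolved exactly as you sketch---the canonical lift is $\hat{\mfrako}$-admissible, so after translation it must leave $\hat{v}$ along some $\hat{\ve}_i\in\Out_{\hat{\mfrako}}(\hat{v})$, forcing its abelianization into $\Lambda_{\hat{G}}(\alpha_i)$ for that $i$---which is precisely what the paper asserts (tersely, via ``one can also observe'').
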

 \begin{proof}
 First we note that each closed walk $C=\va_1\cdots\va_L$ on $G$ can be lifted to a circuit $\hat{C}=\hat{\va}_1\cdots \hat{\va}_L$ on $\hat{G}$ in the following way: if $\va_k$ orients from $v_i$ to $v_j$, then $\hat{\va}_k$ is the unique oriented edge from $\hat{v}_i$ to $\hat{v}_j$ with respect to $\hat{\mfrako}$. And in this sense, $\tau_\mu(C)=\tau_{\hat{\mu}}(\hat{C})$. Therefore, finding a closed walk $C$ on $G$ with initial vertex $v$ such that $\theta_{ij}(C)\geq q_{ij}$ for all adjacent $v_i$ and $v_j$ is equivalent to finding a shortest circuit $\hat{C}$ on $\hat{G}$ with initial vertex $\hat{v}$ such that $\hat{C}^{\ab}\geq \alpha$. Since $\hat{C}^{\ab}\in H_1(\hat{G},\mbbZ)$, this means $\hat{C}^{\ab}\in\Lambda_{\hat{G}}(\alpha)\bigcap H_1(\hat{G},\mbbZ)$. Note that $\Lambda_{\hat{G}}(\alpha)\bigcap H_1(\hat{G},\mbbZ)$ must be nonempty, since $\hat{G}$ is bridgeless. 
 
 Suppose $\hat{v}\in \supp(\alpha)$ at Step~5.  If $\beta$ derived at Step~6 is connected, then at Step~15, Algorithm~2 reduces to Algorithm~1, and the output $\hat{C}$ is a direction consistent circuit satisfying $\Vert\beta \Vert_{\hat{\mu}}=\tau_{\hat{\mu}}(\hat{C})$. Therefore, for all circuits $\hat{C'}$ such that $\hat{C'}^{\ab}\in\Lambda_{\hat{G}}(\alpha)\bigcap H_1(\hat{G},\mbbZ)$, by Lemma~\ref{L:ineq}, we know that $\hat{C'}$ goes through $\hat{v}$ and $\tau_{\hat{\mu}}(\hat{C'})\geq \Vert \hat{C'}^{\ab} \Vert_{\hat{\mu}}\geq \Vert\beta \Vert_{\hat{\mu}}=\tau_{\hat{\mu}}(\hat{C})$, solving Problem~\ref{Pr:1c-TRP-new}. 
 
 Now suppose $\hat{v}\notin \supp(\alpha)$. Then one can also observe that the desired shortest circuit $\hat{C}$ must satisfy  $\hat{C}^{\ab}\in\left(\bigcup_{i=1}^s\Lambda_{\hat{G}}(\alpha_i)\right)\bigcap H_1(\hat{G},\mbbZ)$. If $\beta$ derived at Step~6 is connected, then at Step~15, Algorithm~2 again reduces to Algorithm~1, and the output $\hat{C}$ is a direction consistent circuit satisfying $\Vert\beta \Vert_{\hat{\mu}}=\tau_{\hat{\mu}}(\hat{C})$. Now consider $\hat{C'}^{\ab}\in\Lambda_{\hat{G}}(\alpha_i)\bigcap H_1(\hat{G},\mbbZ)$ for some $1\leq i\leq s$, we know that $\hat{C'}$ goes through $\hat{v}$ and $\tau_{\hat{\mu}}(\hat{C'})\geq \Vert \hat{C'}^{\ab} \Vert_{\hat{\mu}}\geq \Vert\beta_i \Vert_{\hat{\mu}}\geq \Vert\beta \Vert_{\hat{\mu}} =\tau_{\hat{\mu}}(\hat{C})$, solving Problem~\ref{Pr:1c-TRP-new}. 
 
 \end{proof}
 
 \begin{remark}
 In general, if $\beta$ at Step~15 of Algorithm~3 is not connected, then the output of Algorithm~3 is not necessarily an optimal solution. The reason is that the extra paths in the Steiner tree derived when running Algorithm~2 introduces extra length to the output circuit. Actually for $\beta'\in \Lambda_{\hat{G}}(\alpha)\bigcap H_1(\hat{G},\mbbZ)$, we may derive a Steiner tree $T_{\beta'}$ specific to $\beta'$ ($T_{\beta'}$ is trivial if and only if $\beta'$ is connected). Running Algorithm~2 for $\beta'$, we get $\hat{C}_{\beta'}$ such that $\tau_\mu(\hat{C}_{\beta'})=2\mu(T_{\beta'})+\Vert\beta'\Vert_\mu$ by Theorem~\ref{T:HSCDP}. Because of  the extra length term $2\mu(T_{\beta'})$, optimal $\beta$ does not necessarily afford optimal $\hat{C}$ if $\beta$ is not connected. 
 \end{remark}

\bibliographystyle{alpha}
\bibliography{citation}
\end{document}